\newenvironment{eq}{\begin{equation}}{\end{equation}}
\newenvironment{proof}{{\bf Proof}:}{\vskip 5mm }
\newtheorem{proposition}{Proposition}[subsection]
\newtheorem{lemma}[proposition]{Lemma}
\newtheorem{definition}[proposition]{Definition}
\newtheorem{example}[proposition]{Example}
\newtheorem{remark}[proposition]{Remark}
\newtheorem{problem}[proposition]{Problem}
\newtheorem{construction}[proposition]{Construction}
\newcommand{\llabel}[1]{\label{#1}}
\newcommand{\comment}[1]{}
\newcommand{\sr}{\rightarrow}
\newcommand{\zz}{{\bf Z\rm}}
\newcommand{\nn}{{\bf N\rm}}
\newcommand{\nat}{\nn}
\newcommand{\mbind}{\rho}
\begin{document}
\parskip = 2mm
\begin{center}
{\bf\Large Lawvere theories and Jf-relative monads\footnote{\em 2000 Mathematical Subject Classification: 
18C10  
18C99, 
}}


\vspace{3mm}

{\large\bf Vladimir Voevodsky}\footnote{School of Mathematics, Institute for Advanced Study,
Princeton NJ, USA. e-mail: vladimir@ias.edu}
\vspace {3mm}

{\large\bf December 2015 - January 2016}  
\end{center}

\begin{abstract}
In this paper we provide a detailed construction of an equivalence between the category of Lawvere theories and the category of relative monads on the obvious functor $Jf:F\sr Sets$ where $F$ is the category with the set of objects $\nat$ and morphisms being the functions between the standard finite sets of the corresponding cardinalities. The methods of this paper are fully constructive and it should be formalizable in the Zermelo-Fraenkel theory without the axiom of choice and the excluded middle. It is also easily formalizable in the UniMath. 
\end{abstract}

\tableofcontents

\subsection{Introduction}
The notion of a relative monad is introduced in \cite[Def.1, p. 299]{ACU} and considered in more detail in \cite{ACU2}. The categories of relative monads are parametrized by functors rather than by categories, i.e., while one speaks of a monad on a category $C$ one speaks of a relative monad on a functor $J:C\sr D$. We reminds the relevant definitions and constructions of \cite{ACU2} in the first section of the paper. 

Following \cite{FPT} we let $F$ denote the category with the set of objects $\nat$ and the sets of morphisms $Mor_F(m,n)$ being the sets of functions $stn(m)\sr stn(n)$ where $stn(n)=\{i\in\nat\,|\,i<n\}$ is the standard set with $n$ elements. 

For a universe $U$ let $Sets(U)$ be the category of sets in $U$ (see a detailed definition in Section \ref{LRML}). For any $U$ there is an obvious functor $Jf_U:F\sr Sets(U)$. The main construction of the paper is a construction of an equivalence between the category $RMon(Jf_U)$ of relative monads on $Jf_U$ and the category $LW(U)$ of Lawvere theories in $U$ (see \cite{LandC} for the precise definition of $LW(U)$). 

While the main idea of this construction is straightforward its detailed presentation requires a considerable amount of work. In particular, since we work, as in \cite{LandC}, in the Zermelo-Fraenkel set theory without the axiom of choice and without the excluded middle axiom, we had to reprove a number of results about coproducts. One of the unexpected discoveries was the fact that it is impossible to construct the finite coproducts structure on the category $F$ and that instead one has to work with a weaker structure of finite ordered coproducts.  

We use the diagrammatic order in writing compositions, i.e., for $f:X\sr Y$ and $g:Y\sr Z$ we write $f\circ g$ for the composition of $f$ and $g$. 

We do not make precise the concept of a {\em universe} that we use for some of the statements of the paper. It would be certainly sufficient to assume that $U$ is a Grothendieck universe. However, it seems likely that sets $U$ satisfying much weaker conditions can be used both for the statements and for the proofs of our results.  

The problem/construction pairs in the paper can be interpreted in the ZF-formalization as follows. The ``problem'' part is formalized as a formula $P(x_1,\dots,x_n)$ with the free variables $x_1,\dots,x_n$ corresponding to the objects introduced in the problem. The ``construction part'' is formalized as a theorem of the form ``there exist unique $x_1,\dots,x_n$ such that $P(x_1,\dots,x_n)$ and $Q(x_1,\dots,x_n)$'' where $Q$ is a formula expressing the detailed properties of the objects defined by the construction. For example, formulas $P$ and $Q$ in the ZF-formalization of the problem ``to construct a homomorphism of groups $H:G_1\sr G_2$'' with the construction ``Let $G_1=\zz/2$, $G_2=\zz/2$ and $H=Id_{\zz/2}$'' will be as follows. The formula $P(G_1,G_2,H)$ will be expressing the fact that $G_1$ is a group, $G_2$ is a group and $H$ is a homomorphism from $G_1$ to $G_2$. The formula $Q(G_1,G_2,H)$ will be expressing the fact that $G_1=\zz/2$, $G_2=\zz/2$ and $H$ equals the identity homomorphism of $\zz/2$. One can envision a proof assistant with the user-level language being some convenient dependently typed language that translates this language into formulas and deductions of the ZF and then verifies these formulas and deductions according to the rules of the first-order logic.

\subsection{Relative monads}

\begin{definition}
\llabel{2015.12.22.def1}
Let $J:C\sr D$ be a functor. A relative monad ${\bf RR}$ on $J$ or a $J$-relative monad is a collection of data of the form
\begin{enumerate}
\item a function $RR:Ob(C)\sr Ob(D)$,
\item for each $X$ in $C$ a morphism $\eta(X):J(X)\sr RR(X)$,
\item for each  $X,Y$ in $C$ and $f:J(X)\sr RR(Y)$ a morphism $\mbind(f):RR(X)\sr RR(Y)$,
\end{enumerate}
such that the following conditions hold:
\begin{enumerate}
\item for any $X\in C$, $\mbind(\eta(X))=Id_{RR(X)}$,
\item for any $f:J(X)\sr RR(Y)$, $\eta(X)\circ \mbind(f)=f$,
\item for any $f:J(X)\sr RR(Y)$, $g:J(Y)\sr RR(Z)$, 
$$\mbind(f)\circ \mbind(g)=\mbind(f\circ \mbind(g))$$
\end{enumerate}
\end{definition}
The following definition repeats \cite[Definition 2.2, p.4]{ACU2}.
\begin{definition}\llabel{2015.12.22.def2}
Let $J:C\sr D$ be a functor and ${\bf RR}=(RR,\eta,\mbind)$, ${\bf RR}'=(RR',\eta',\mbind')$ be two relative monads on $J$. A morphism $\phi:{\bf RR}\sr {\bf RR}'$ is a function $\phi:Ob(C)\sr Mor(D)$ that to each $X\in C$ assigns a morphism $\phi(X):RR(X)\sr RR'(X)$ such that 
\begin{enumerate}
\item for any $X\in C$ one has $\eta'(X)=\eta(X)\circ \phi(X)$,
\item for any $f:J(X)\sr RR(Y)$ one has 
$$\mbind(f)\circ \phi(Y)=\phi(X)\circ \mbind'(f\circ \phi(Y))$$
\end{enumerate}
\end{definition}
\begin{lemma}
\llabel{2015.12.22.l1}
Let $J:C\sr D$ be a functor and ${\bf RR}$ a relative monad on $J$. Then the function $X\mapsto Id_{RR(X)}$ is a morphism of relative monads ${\bf RR}\sr {\bf RR}$.
\end{lemma}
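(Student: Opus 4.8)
The plan is to take ${\bf RR}'={\bf RR}$ in Definition \ref{2015.12.22.def2} and to verify that the constant assignment $\phi(X)=Id_{RR(X)}$ satisfies the two defining conditions of a morphism of relative monads. Since the source and target monad coincide, we have $\eta'=\eta$ and $\mbind'=\mbind$, so every primed datum may be replaced by the corresponding unprimed one, and the whole verification collapses to the unit laws for composition in $D$. It is worth noting in advance that none of the three relative-monad axioms of Definition \ref{2015.12.22.def1} are needed; only the identity laws of the ambient category $D$ enter.

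For the first condition I would need $\eta(X)=\eta(X)\circ\phi(X)$ for each $X\in C$. Here $\phi(X)=Id_{RR(X)}$ is the identity on the codomain of $\eta(X):J(X)\sr RR(X)$, so this is exactly the right unit law $\eta(X)\circ Id_{RR(X)}=\eta(X)$. For the second condition, fix $f:J(X)\sr RR(Y)$; the required identity reads $\mbind(f)\circ\phi(Y)=\phi(X)\circ\mbind(f\circ\phi(Y))$. On the left, post-composing $\mbind(f):RR(X)\sr RR(Y)$ with $\phi(Y)=Id_{RR(Y)}$ returns $\mbind(f)$ by the right unit law. On the right, one first observes $f\circ\phi(Y)=f\circ Id_{RR(Y)}=f$, again by the right unit law, whence $\mbind(f\circ\phi(Y))=\mbind(f)$; pre-composing with $\phi(X)=Id_{RR(X)}$ leaves this unchanged by the left unit law. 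Hence both sides equal $\mbind(f)$.

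There is essentially no obstacle here, since the statement is purely formal. The single point requiring a moment of care is the diagrammatic composition convention, under which one must correctly track that $Id_{RR(Y)}$ acts as the right unit for both $f$ and $\mbind(f)$, while $Id_{RR(X)}$ acts as the left unit for $\mbind(f\circ\phi(Y))$; getting the sides of these units straight is all that the proof amounts to.
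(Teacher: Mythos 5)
Your proof is correct and matches the paper's approach: the paper simply records that both conditions of Definition \ref{2015.12.22.def2} are straightforward, and your verification supplies exactly the intended details, reducing everything to the identity laws of $D$ after setting $\eta'=\eta$ and $\mbind'=\mbind$.
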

\begin{proof}
Both conditions of Definition \ref{2015.12.22.def2} are straightforward to prove.
\end{proof}
\begin{lemma}
\llabel{2015.12.22.l2}
Let $J:C\sr D$ be a functor and ${\bf RR},{\bf RR}',{\bf RR}''$ be relative monads on $J$. Then if $\phi$ and $\phi'$ are functions $Ob(C)\sr Mor(D)$ which are morphisms of relative monads ${\bf RR}\sr {\bf RR}'$ and ${\bf RR}'\sr {\bf RR}''$ then the function $X\mapsto \phi(X)\circ \phi'(X)$ is a morphism ${\bf RR}\sr {\bf RR}''$. 
\end{lemma}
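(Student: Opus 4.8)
The plan is to verify the two conditions of Definition 2.2 (the numbered labels \ref{2015.12.22.def2}) directly for the candidate morphism $\psi$ defined by $\psi(X) = \phi(X)\circ\phi'(X)$, using that $\phi$ satisfies those conditions for ${\bf RR}\sr{\bf RR}'$ and $\phi'$ satisfies them for ${\bf RR}'\sr{\bf RR}''$. I would first check condition (1), the compatibility with units. We are given $\eta'(X)=\eta(X)\circ\phi(X)$ and $\eta''(X)=\eta'(X)\circ\phi'(X)$; substituting the first into the second yields $\eta''(X)=\eta(X)\circ\phi(X)\circ\phi'(X)=\eta(X)\circ\psi(X)$, which is exactly what condition (1) demands for $\psi$. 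This step is a one-line substitution using associativity of composition in $D$.

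The main work is condition (2), the compatibility with the bind operation. I must show that for any $f:J(X)\sr RR(Y)$ one has $\mbind(f)\circ\psi(Y)=\psi(X)\circ\mbind''(f\circ\psi(Y))$, where $\mbind''$ is the bind of ${\bf RR}''$. The natural approach is to start from the left-hand side $\mbind(f)\circ\phi(Y)\circ\phi'(Y)$ and rewrite it by applying the two hypotheses in turn. Applying condition (2) for $\phi$ to $f:J(X)\sr RR(Y)$ gives $\mbind(f)\circ\phi(Y)=\phi(X)\circ\mbind'(f\circ\phi(Y))$, so the left-hand side becomes $\phi(X)\circ\mbind'(f\circ\phi(Y))\circ\phi'(Y)$. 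Now I want to apply condition (2) for $\phi'$ to the morphism $g:=f\circ\phi(Y):J(X)\sr RR'(Y)$; this yields $\mbind'(g)\circ\phi'(Y)=\phi'(X)\circ\mbind''(g\circ\phi'(Y))$, turning the expression into $\phi(X)\circ\phi'(X)\circ\mbind''((f\circ\phi(Y))\circ\phi'(Y))$.

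The remaining task is to recognize that $(f\circ\phi(Y))\circ\phi'(Y)=f\circ(\phi(Y)\circ\phi'(Y))=f\circ\psi(Y)$ by associativity, and that $\phi(X)\circ\phi'(X)=\psi(X)$ by definition, giving $\psi(X)\circ\mbind''(f\circ\psi(Y))$, the desired right-hand side. The step I expect to be the genuine obstacle — or at least the point where care is needed — is checking that the types line up so that the second hypothesis is applicable: I must confirm that $f\circ\phi(Y)$ really is a morphism of the form $J(X)\sr RR'(Y)$, so that it is a legitimate argument for $\mbind'$ and so that condition (2) for $\phi'$ applies to it with this particular $g$. Since $\phi(Y):RR(Y)\sr RR'(Y)$, the composite $f\circ\phi(Y)$ indeed lands in $RR'(Y)$, and the source is $J(X)$, so the application is type-correct. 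Beyond this bookkeeping the proof is purely a sequence of substitutions and reassociations, with no choices or nontrivial constructions, so it goes through in the constructive, choice-free setting of the paper.
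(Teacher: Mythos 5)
Your proposal is correct and follows essentially the same two-step computation as the paper's own proof: the unit condition by substituting the two unit identities, and the bind condition by applying the second condition for $\phi$ and then for $\phi'$ applied to $f\circ\phi(Y)$. Your version is in fact slightly more careful than the paper's, which elides the primes distinguishing $\mbind$, $\mbind'$, and $\mbind''$.
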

\begin{proof}
Let $X\in C$ then
$$\eta(X)\circ \phi(X)\circ \phi'(X)=\eta'(X)\circ \phi'(X)=\eta''(X)$$
this proves the first condition of Definition \ref{2015.12.22.def2}. To prove the second condition let $f:J(X)\sr RR(Y)$ then we have
$$\mbind(f)\circ \phi(Y)\circ \phi'(Y)=\phi(X)\circ \mbind(f\circ \phi(Y))\circ \phi'(Y)=\phi(X)\circ \phi'(X)\circ \mbind(f\circ \phi(Y)\circ \phi'(Y))$$
\end{proof}
\begin{problem}\llabel{2015.12.22.prob3}
Let $J:C\sr D$ be a functor. To construct a category $RMon(J)$ of relative monads on $J$.
\end{problem}
\begin{construction}\rm\llabel{2015.12.18.constr3}
Applying the same approach as before we obtain category data with the set of objects being the set $RMon(J)$ of relative monads on $J$, the set of morphisms 
being the set of triples $(({\bf RR},{\bf RR}'),\phi)$ where ${\bf RR}$, ${\bf RR}'$ are relative monads on $J$ and $\phi$ is a morphism of relative monads from ${\bf RR}$ to ${\bf RR}'$ as 
given by Definition \ref{2015.12.22.def2}, the identity morphisms are given by Lemma \ref{2015.12.22.l1} and compositions by Lemma \ref{2015.12.22.l2}. It follows immediately from the corresponding properties of morphisms in $C$ that these data satisfies the left and right identity and the associativity axioms forming a category. The set of morphisms from ${\bf RR}$ to ${\bf RR}'$ in this category is not equal to the set of morphisms of relative monads but it is in the obvious bijective correspondence with this set and we will use both functions of this bijective correspondence as coercions\footnote{When a function $f:X\sr Y$ is declared as a {\em coercion} then every time that one has an expression $a$ that denotes an element of the set $X$ in a position where an element of the set $Y$ is expected one replaces it by $f(a)$}.  
\end{construction}  
\begin{lemma}
\llabel{2016.01.03.l5}
Let $\phi:{\bf RR}\sr {\bf RR}'$ be a morphism of relative monads on $J:C\sr D$ such that for all $X\in C$ the morphism $\phi(X):RR(X)\sr RR'(X)$ is an isomorphism. Then $\phi$ is an isomorphism in the category of relative monads on $J$.
\end{lemma}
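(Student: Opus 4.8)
The plan is to construct an explicit two-sided inverse $\psi:{\bf RR}'\sr{\bf RR}$ to $\phi$ by setting $\psi(X)=\phi(X)^{-1}$, the inverse of the isomorphism $\phi(X)$ in $D$. Since in any category the inverse of an isomorphism is uniquely determined, the assignment $X\mapsto\phi(X)^{-1}$ is picked out by a first-order formula in $X$ and $\phi$, so $\psi$ is a well-defined function $Ob(C)\sr Mor(D)$ without any appeal to the axiom of choice, which keeps the argument within the constructive setting of the paper. It then remains to check that $\psi$ satisfies the two conditions of Definition \ref{2015.12.22.def2} for a morphism ${\bf RR}'\sr{\bf RR}$, and that $\phi$ and $\psi$ compose to the identity morphisms in both orders.

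For the first condition I would start from the first condition for $\phi$, namely $\eta'(X)=\eta(X)\circ\phi(X)$, and compose on the right with $\psi(X)=\phi(X)^{-1}$ to obtain $\eta'(X)\circ\psi(X)=\eta(X)$, which is exactly the required identity $\eta(X)=\eta'(X)\circ\psi(X)$. For the second condition, given $f:J(X)\sr RR'(Y)$, the key move is to apply the second condition for $\phi$ to the morphism $h=f\circ\psi(Y):J(X)\sr RR(Y)$. Since $h\circ\phi(Y)=f\circ\phi(Y)^{-1}\circ\phi(Y)=f$, that condition reads $\mbind(f\circ\psi(Y))\circ\phi(Y)=\phi(X)\circ\mbind'(f)$. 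Composing this equation on the left with $\psi(X)=\phi(X)^{-1}$ and on the right with $\psi(Y)=\phi(Y)^{-1}$, and cancelling the resulting $\phi(X)^{-1}\circ\phi(X)$ and $\phi(Y)\circ\phi(Y)^{-1}$, yields $\psi(X)\circ\mbind(f\circ\psi(Y))=\mbind'(f)\circ\psi(Y)$, which is precisely the second condition for $\psi$.

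Finally, to see that $\phi$ is an isomorphism in $RMon(J)$, I would invoke the description of composition and identities from Lemma \ref{2015.12.22.l2} and Lemma \ref{2015.12.22.l1}: the composite $\phi\circ\psi$ is the morphism $X\mapsto\phi(X)\circ\psi(X)=\phi(X)\circ\phi(X)^{-1}=Id_{RR(X)}$, which is $Id_{{\bf RR}}$, and likewise $\psi\circ\phi$ sends $X$ to $\phi(X)^{-1}\circ\phi(X)=Id_{RR'(X)}$, giving $Id_{{\bf RR}'}$. Hence $\psi$ is a two-sided inverse of $\phi$ in the category of relative monads, so $\phi$ is an isomorphism.

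I expect no genuine conceptual obstacle in this argument; the only thing requiring care is the bookkeeping of the diagrammatic composition order in the second morphism condition, where one must cancel $\phi$ and $\phi^{-1}$ on the correct sides (and in particular choose the input $h=f\circ\psi(Y)$ so that $h\circ\phi(Y)$ collapses to $f$). The secondary point worth recording is that the pointwise-inverse function is definable without choice, so that the whole construction of $\psi$ is legitimate in the constructive framework of the paper.
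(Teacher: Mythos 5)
Your proposal is correct and follows essentially the same route as the paper's proof: define the pointwise inverse, verify the two conditions of Definition \ref{2015.12.22.def2} by applying the corresponding conditions for $\phi$ to $f\circ\phi(Y)^{-1}$ and cancelling, and observe that the composites are the identity morphisms. The paper leaves the final inverse check as ``straightforward'' where you spell it out via Lemmas \ref{2015.12.22.l1} and \ref{2015.12.22.l2}, but this is the same argument.
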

\begin{proof}
Set $\phi'(X)=(\phi(X))^{-1}$. In view of the definition of the composition of morphisms of relative monads and the identity morphism of relative monads it is sufficient to verify that the family $\phi'$ is a morphism of relative monads from ${\bf RR}'$ to ${\bf RR}$. That it is the inverse to $\phi$ is then straightforward to prove. 

Let us check the two conditions of Definition \ref{2015.12.22.def2}. The equality 
$$\eta(X)=\eta'(X)\circ \phi'(X)$$
follows from the equality $\eta'(X)=\eta(X)\circ \phi(X)$ by composing it with $\phi'(X)$ on the right and using the fact that $\phi(X)\circ \phi'(X)=Id_{RR(X)}$.

The second condition is of the form, for any $f':J(X)\sr RR'(Y)$,
\begin{eq}\llabel{2016.01.03.eq2}
\mbind'(f')\circ\phi'(Y)=\phi'(X)\circ\mbind'(f'\circ\phi'(Y))
\end{eq}
Applying the second condition of Definition \ref{2015.12.22.def2} for $\phi$ to $f=f'\circ \phi'(Y)$ and using the equality $\phi'(Y)\circ\phi(Y)=Id_{RR'(Y)}$ we get
$$\mbind(f'\circ\phi'(Y))\circ\phi(Y)=\phi(X)\circ\mbind'(f'\circ \phi'(Y)\circ\phi(Y))=\phi(X)\circ\mbind'(f')$$
It remains to compose this equality with $\phi'(Y)$ on the right and $\phi'(X)$ on the left and rewrite the equalities $\phi(Y)\circ\phi'(Y)=Id_{RR(Y)}$ and $\phi'(X)\circ \phi(X)=Id_{RR'(X)}$.
\end{proof}

Let us remind the definition of the Kleisli category of a relative monad (see \cite[p.8]{ACU2}). 
\begin{problem}\llabel{2015.12.22.prob1}
Let $J:C\sr D$ be a functor and ${\bf RR}$ be a relative monad on $J$. To define a category $K({\bf RR})$ that will be called Kleisli category of ${\bf RR}$.
\end{problem}
\begin{construction}\rm\llabel{2015.12.22.constr3}
We set $Ob(K({\bf RR}))=Ob(C)$ and 
$$Mor(K({\bf RR}))=\amalg_{X,Y\in Ob(K({\bf RR}))}Mor(J(X),RR(Y))$$
We will, as before, identify the set of morphisms in $K({\bf RR})$ from $X$ to $Y$ with $Mor(J(X),RR(Y))$ by means of the obvious bijections. 

For $X\in Ob(C)$ we set $Id_{X,K({\bf RR})}=\eta(X)$.

For $f\in Mor(J(X),RR(Y))$, $g\in Mor(J(Y),RR(Z))$ we set $f\circ_{K({\bf RR})}g=f\circ_D \mbind(g)$.

Verification of the associativity and the left and right identity axioms of a category are straightforward.
\end{construction}
\begin{problem}\llabel{2015.12.22.prob2}
Let $J:C\sr D$ be a functor and ${\bf RR}$ be a relative monad on $J$. To construct a functor $L_{{\bf RR}}:C\sr K({\bf RR})$.
\end{problem}
\begin{construction}\rm\llabel{2015.12.22.constr4}
We set $L_{Ob}=Id$ and for $f:X\sr Y$, $L(f)=J(f)\circ_D\eta(Y)$. Verification of the identity and composition axioms of a functor are straightforward.
\end{construction}
The following lemma will be needed below.
\begin{lemma}
\llabel{2016.01.03.l4b}
Let $u:X\sr Y$ in $C$ and $g:J(Y)\sr RR(Z)$ in $D$. Then one has
$$L_{{\bf RR}}(u)\circ_{K({\bf RR})} g=J(u)\circ_D g$$
\end{lemma}
\begin{proof}
One has
$$L_{{\bf RR}}(u)\circ_{K({\bf RR})} g=L_{{\bf RR}}(u)\circ_D\mbind(g)=J(u)\circ_D\eta(Y)\circ_D\mbind(g)=J(u)\circ_D g$$
\end{proof}
\begin{problem}\llabel{2015.12.22.prob4}
Let $J:C\sr D$ be a functor and $\phi:{\bf RR}\sr {\bf RR}'$ a morphism of relative monads on $J$. To construct a functor $K(\phi):K({\bf RR})\sr K({\bf RR}')$ such that $L_{{\bf RR}}\circ K(\phi)=L_{{\bf RR}'}$.
\end{problem}
\begin{construction}\rm\llabel{2015.12.22.constr5}
This construction is not, as far as we can tell, described in \cite{ACU2} and we will do all computations in detail.

We set $K(\phi)_{Ob}=Id$. For $f\in Mor_D(J(X),RR(Y))$ we set 
$$K(\phi)(f)=f\circ_D \phi(Y).$$
For the identity axiom of a functor we have
$$K(\phi)(Id_{X,K({\bf RR})})=K(\phi)(\eta_X)=\eta_X\circ_D \phi(X)=\eta'_X=Id_{X,K({\bf RR}')}$$
For the composition axiom, for $f\in Mor_D(J(X),RR(Y))$, $g\in Mor_D(J(Y),RR(Z))$ we have
$$K(\phi)(f\circ_{{\bf RR}} g)=K(\phi)(f\circ_D \mbind(g))=f\circ_D \mbind(g)\circ_D \phi(Z)=f\circ_D \phi(Y)\circ_D\mbind'(g\circ_D\phi(Z))$$
and
$$K(\phi)(f)\circ_{{\bf RR}'} K(\phi)(g)=(f\circ_D\phi(Y))\circ_{{\bf RR}'}(g\circ_D\phi(Z))=f\circ_D\phi(Y)\circ_D\mbind'(g\circ_D\phi(Z))$$
The condition $L_{{\bf RR}}\circ K(\phi)=L_{{\bf RR}'}$ obviously holds on objects and on morphisms we have for $f\in Mor_C(X,Y)$: 
$$(L_{{\bf RR}}\circ K(\phi))(f)=K(\phi)(L_{{\bf RR}}(f))=K(\phi)(J(f)\circ_D\eta(Y))=J(f)\circ_D\eta(Y)\circ_D\phi(Y)=$$$$J(f)\circ_D\eta'(Y)=L_{{\bf RR}'}(f).$$
Construction \ref{2015.12.22.constr5} is completed. 
\end{construction}
\begin{lemma}
\llabel{2016.01.01.l2}
Let $J:C\sr D$ be a functor. Then one has:
\begin{enumerate}
\item for a relative monad ${\bf RR}$ on $J$, $K(Id_{{\bf RR}})=Id_{K({\bf RR})}$,
\item for morphisms $\phi:{\bf RR}\sr {\bf RR}'$, $\phi':{\bf RR}'\sr {\bf RR}''$ of relative monads on $J$, $K(\phi\circ \phi')=K(\phi)\circ K(\phi')$.
\end{enumerate}
\end{lemma}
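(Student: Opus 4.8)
The plan is to prove both functoriality statements by direct computation, unwinding the definitions from Construction \ref{2015.12.22.constr5}. Since both $K(Id_{\bf RR})$ and $Id_{K({\bf RR})}$ act as the identity on objects, and likewise both sides of the second equation agree on objects (all the functors $K(-)$ are the identity on objects by construction), it suffices in each case to check equality on morphisms. Recall that a morphism in $K({\bf RR})$ from $X$ to $Y$ is identified with an element $f\in Mor_D(J(X),RR(Y))$, and that $K(\phi)(f)=f\circ_D\phi(Y)$.

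For the first part, I would take $f\in Mor_D(J(X),RR(Y))$ and compute $K(Id_{\bf RR})(f)=f\circ_D (Id_{\bf RR})(Y)$. Here $(Id_{\bf RR})(Y)$ is the component at $Y$ of the identity morphism of relative monads, which by Lemma \ref{2015.12.22.l1} is $Id_{RR(Y)}$. Hence $K(Id_{\bf RR})(f)=f\circ_D Id_{RR(Y)}=f=Id_{K({\bf RR})}(f)$, using the right identity axiom in $D$. This establishes $K(Id_{\bf RR})=Id_{K({\bf RR})}$.

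For the second part, let $\phi:{\bf RR}\sr{\bf RR}'$ and $\phi':{\bf RR}'\sr{\bf RR}''$ be morphisms of relative monads. By Lemma \ref{2015.12.22.l2}, the composite $\phi\circ\phi'$ has component $(\phi\circ\phi')(Y)=\phi(Y)\circ_D\phi'(Y)$ at each object $Y$. For $f\in Mor_D(J(X),RR(Y))$ I would compute the left-hand side
$$K(\phi\circ\phi')(f)=f\circ_D(\phi\circ\phi')(Y)=f\circ_D\phi(Y)\circ_D\phi'(Y),$$
and then the right-hand side
$$(K(\phi)\circ K(\phi'))(f)=K(\phi')(K(\phi)(f))=K(\phi')(f\circ_D\phi(Y))=(f\circ_D\phi(Y))\circ_D\phi'(Y).$$
The two expressions agree by the associativity of composition in $D$, giving $K(\phi\circ\phi')=K(\phi)\circ K(\phi')$.

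There is no genuine obstacle here: the statement is a routine verification that the Kleisli construction respects identities and composition of relative-monad morphisms, and every ingredient is supplied by the explicit formulas of Construction \ref{2015.12.22.constr5} together with Lemmas \ref{2015.12.22.l1} and \ref{2015.12.22.l2}. The only points requiring mild care are to recall that the functors $K(-)$ agree on objects so that object-level equality is automatic, and to correctly substitute the component formulas for the identity and composite morphisms of relative monads before invoking the identity and associativity axioms in $D$.
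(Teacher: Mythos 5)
Your proposal is correct and matches the paper's own argument, which simply cites the right identity axiom of $D$ for part (1) and the associativity of composition in $D$ for part (2) — exactly the two facts your expanded computation reduces to. The extra detail (unwinding $K(\phi)(f)=f\circ_D\phi(Y)$ and the component formulas for identity and composite monad morphisms) is a faithful elaboration of the same route, not a different one.
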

\begin{proof}
The first assertion follows from the right identity axiom for $D$.

The second assertion follows from the associativity of composition in $D$.
\end{proof}

\subsection{Binary coproducts and finite ordered coproducts in the constructive setting} 

In the absence of Axiom of Choice (AC) the structure of finite coproducts on a category can not be obtained from an initial object and the structure of binary coproducts. The same, of course, is true for products - the proof of \cite[Prop.1, p. 73]{MacLane} essentially depends on the AC. However, binary coproducts allow one to construct finite {\em ordered} coproducts as described below. 
\begin{definition}\llabel{2015.12.20.def1}
A binary coproducts structure on a category $C$ is a function that assigns to any pair of objects $X,Y$ of $C$ an object $X\amalg Y$ and two morphisms 
$$ii_0^{X,Y}:X\sr X\amalg Y$$
$$ii_1^{X,Y}:Y\sr X\amalg Y$$
such that for any object $W$ of $C$ and any two morphisms $f_X:X\sr W$, $f_Y:Y\sr W$ there exists a unique morphism $\Sigma(f_X,f_Y):X\amalg Y\sr W$ such that
$$ii_0^{X,Y}\circ\Sigma(f_X,f_Y)=f_X$$
$$ii_1^{X,Y}\circ\Sigma(f_X,f_Y)=f_Y$$
\end{definition}
\begin{definition}
\llabel{2015.12.24.def2}
A finite ordered coproduct structure on a category $C$ is a function that for any $m\ge 0$ and any sequence $X=(X_0,\dots,X_{m-1})$ of objects of $C$ defines an object $\amalg_{i=0}^{m-1}X_i$ and morphisms $ii^X_i:X_i\sr \amalg_{i=0}^{m-1}X_i$ such that for any sequence $f_i:X_i\sr Y$, $i=0,\dots,m-1$ there exists a unique morphism $\Sigma_{i=0}^{m-1} f_i:\amalg_{i=0}^{m-1}X_i\sr Y$ such that 
\begin{eq}\llabel{2015.12.24.eq2}
ii^X_j\circ \Sigma_{i=0}^{m-1} f_i = f_j
\end{eq}
\end{definition}
Note that for $m=0$ there is a unique sequence of the form $(X_0,\dots,X_{m-1})$ - the empty sequence, and the corresponding $\amalg_{i=0}^{m-1}X_i$ is an initial object of $C$. 
\begin{problem}\llabel{2015.12.24.prob1}
Given a category $C$ with an initial object $0$ and a binary coproducts structure to construct a finite ordered coproducts structure on $C$.
\end{problem}
\begin{construction}\rm\llabel{2015.12.24.constr1}
By induction on $m$.

For $m=0$ one defines $\amalg X_i$ to be $0$. The construction of the morphism $\Sigma f_i$, in this case for the empty set of morphisms $f_i$, and its properties follow easily from the definition of an initial object.

For $m=1$ one defines $\amalg X_i=X_0$, $ii^X_0=Id_{X_0}$ and $\Sigma f_i=f_0$. The verification of the conditions is again straightforward.

For the successor one defines 
$$\amalg_{i=0}^m X_i=(\amalg_{i=0}^{m-1}X_i)\amalg X_m$$
and
$$\Sigma_{i=0}^m f_i=\Sigma(\Sigma_{i=0}^{m-1}f_i, f_m)$$
The morphisms $ii^X_i$ for $i=0,\dots,m-1$ are given by
$$ii^X_i=ii^{X'}_i\circ ii^{\amalg_{i=0}^{m-1}X_i , X_m}_0$$
where $X'$ is the sequence $(X_0,\dots,X_{m-1})$, and
$$ii^X_m=ii^{\amalg_{i=0}^{m-1}X_i , X_m}_1$$
To show that $\Sigma_{i=0}^{m}f_i$ satisfies the condition of Definition \ref{2015.12.24.constr1} we have:
\begin{enumerate}
\item for $j<m$
$$ii^X_j\circ \Sigma_{i=0}^{m} f_i = ii^{X}_j\circ \Sigma(\Sigma_{i=0}^{m-1}f_i, f_m)
=ii^{X'}_j\circ ii^{\amalg_{i=0}^{m-1}X_i , X_m}_0 \circ \Sigma(\Sigma_{i=0}^{m-1}f_i, f_m)=ii^{X'}_j\circ \Sigma_{i=0}^{m-1}f_i=f_j$$
where the third equation is from the definition of a binary coproduct,
\item for $j=m$
$$ii^X_m\circ \Sigma_{i=0}^{m} f_i=ii^{\amalg_{i=0}^{m-1}X_i , X_m}_1 \Sigma(\Sigma_{i=0}^{m-1}f_i, f_m)=f_m$$
\end{enumerate}
To show that $f=\Sigma_{i=0}^mf_i$ is a unique morphism satisfying these conditions let $g$ be another morphism such that 
$$ii^X_j\circ g=f_j$$
for all $j=0,\dots,m$. Both $f$ and $g$ are morphisms from $(\amalg_{i=0}^{m-1}X_i)\amalg X_m$. By the uniqueness condition of Definition \ref{2015.12.20.def1} 
it is sufficient to show that 
$$ii^{\amalg_{i=0}^{m-1}X_i , X_m}_0\circ f=ii^{\amalg_{i=0}^{m-1}X_i , X_m}_0\circ g$$
and
$$ii^{\amalg_{i=0}^{m-1}X_i , X_m}_1\circ f=ii^{\amalg_{i=0}^{m-1}X_i , X_m}_1\circ g$$
To prove the first equality it is sufficient, by the inductive assumption, to prove that
$$ii^{X'}_j\circ ii^{\amalg_{i=0}^{m-1}X_i , X_m}_0\circ f=ii^{X'}_j\circ ii^{\amalg_{i=0}^{m-1}X_i , X_m}_0\circ g$$
for all $j=0,\dots,m-1$. This follows from our assumption since
$$ii^{X'}_j\circ ii^{\amalg_{i=0}^{m-1}X_i , X_m}_0=ii^X_j$$
Similarly, the second equality follows from our assumption because
$$ii^{\amalg_{i=0}^{m-1}X_i , X_m}_1=ii^X_m.$$

This completes Construction \ref{2015.12.24.constr1}. 
\end{construction}
\begin{lemma}
\llabel{2016.01.03.l4}
Let $C$ be a category with an initial object $0$ and binary coproducts structure $(\amalg, ii_0, ii_1)$. Let $(\amalg',ii'_i)$ be the finite ordered coproducts structure defined on $C$ by Construction \ref{2015.12.24.constr1}. Then for $X=(X_0,X_1)$ one has
$$(\amalg')_{i=0}^1X_i=X_0\amalg X_1$$
and
$$(ii')_0^X=ii_0^{X_0,X_1}$$
$$(ii')_1^X=ii_1^{X_0,X_1}$$
\end{lemma}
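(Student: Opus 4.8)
The plan is to unwind Construction \ref{2015.12.24.constr1} in the case $m=2$ and read off that each of the three claimed equalities holds by definition, using only the base cases $m=0,1$ together with a single application of the successor step. The key observation is that the successor construction for $m=2$ builds $(\amalg')_{i=0}^1 X_i$ from the sequence $X'=(X_0)$ of length $1$, so the inductive data reduces immediately to the $m=1$ base case, where $\amalg'$ is literally the identity-based construction on $X_0$.

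First I would compute the object. By the successor clause with $m=1$ (so that $m-1=0$ in the formula $\amalg_{i=0}^m X_i = (\amalg_{i=0}^{m-1}X_i)\amalg X_m$, but here applied at the top level with the sequence of length $2$) one has
$$(\amalg')_{i=0}^1 X_i = \left((\amalg')_{i=0}^0 X_i\right)\amalg X_1.$$
By the $m=1$ base case, $(\amalg')_{i=0}^0 X_i = X_0$, where I am careful that the sequence $X'=(X_0)$ has length $1$ and the base case sets $(\amalg')X' = X_0$. Substituting gives $(\amalg')_{i=0}^1 X_i = X_0 \amalg X_1$, which is the first claim.

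Next I would verify the inclusion morphisms. For $(ii')_1^X$ the successor clause gives directly $(ii')_1^X = ii_1^{(\amalg')_{i=0}^0 X_i,\, X_1}$, and since $(\amalg')_{i=0}^0 X_i = X_0$ by the $m=1$ base case, this equals $ii_1^{X_0,X_1}$, establishing the third equation. For $(ii')_0^X$ the successor clause gives $(ii')_0^X = (ii')_0^{X'}\circ ii_0^{(\amalg')_{i=0}^0 X_i,\, X_1}$ where $X'=(X_0)$; by the $m=1$ base case $(ii')_0^{X'} = Id_{X_0}$ and $(\amalg')_{i=0}^0 X_i = X_0$, so the composite collapses to $Id_{X_0}\circ ii_0^{X_0,X_1} = ii_0^{X_0,X_1}$, which is the second equation.

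The whole argument is a careful bookkeeping exercise rather than a substantive proof: the only thing that could go wrong is an off-by-one confusion between the length of the sequence and the upper index in $\amalg_{i=0}^{m-1}$, since the construction is stated by induction on the length $m$ while the lemma is phrased with an explicit index $1$. The main point to get right is therefore matching the $m=1$ base case to the truncated sequence $(X_0)$ and the $m=2$ successor step to the full sequence $(X_0,X_1)$; once this alignment is fixed, all three equalities are definitional and no appeal to the universal property is needed.
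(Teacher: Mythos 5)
Your proposal is correct and matches the paper's own (one-line) proof, which is exactly to unfold Construction \ref{2015.12.24.constr1} in the case $m=2$; your explicit bookkeeping through the successor clause and the $m=1$ base case is just the detailed version of that. The only implicit ingredient beyond pure unfolding is the identity axiom used to collapse $Id_{X_0}\circ ii_0^{X_0,X_1}$, which is harmless.
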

\begin{proof}
The proof is by unfolding Construction \ref{2015.12.24.constr1} in the case $m=2$.
\end{proof}
\begin{lemma}
\llabel{2015.12.24.l5}
Given a category $C$ with the finite ordered  coproducts structure $(\amalg_i X_i,ii^X_i)$ let $f_i:X_i\sr Y$ where $i=0,\dots,m-1$ and $g:Y\sr Z$. Then one has
\begin{eq}\llabel{2015.12.24.eq4}
(\Sigma_i f_i)\circ g=\Sigma_i ( f_i\circ g)
\end{eq}
\end{lemma}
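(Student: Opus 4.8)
The plan is to prove the identity $(\Sigma_i f_i)\circ g = \Sigma_i(f_i \circ g)$ by invoking the uniqueness clause of the finite ordered coproduct structure (Definition \ref{2015.12.24.def2}). The right-hand side $\Sigma_i(f_i\circ g)$ is, by its very construction, the \emph{unique} morphism $h:\amalg_i X_i \sr Z$ satisfying $ii^X_j \circ h = f_j \circ g$ for every $j$. So it suffices to show that the left-hand side $(\Sigma_i f_i)\circ g$ is another morphism satisfying exactly these defining equations; uniqueness then forces the two to coincide.

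First I would fix $j\in\{0,\dots,m-1\}$ and compute $ii^X_j \circ \big((\Sigma_i f_i)\circ g\big)$. Using associativity of composition in $C$, this equals $\big(ii^X_j \circ \Sigma_i f_i\big)\circ g$. Then I apply the defining property (\ref{2015.12.24.eq2}) of $\Sigma_i f_i$, namely $ii^X_j \circ \Sigma_i f_i = f_j$, to rewrite the bracket as $f_j$, giving $f_j \circ g$. This is precisely the $j$-th defining equation for $\Sigma_i(f_i\circ g)$.

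Having verified that $(\Sigma_i f_i)\circ g$ satisfies $ii^X_j \circ \big((\Sigma_i f_i)\circ g\big) = f_j\circ g$ for all $j$, I then conclude by the uniqueness part of Definition \ref{2015.12.24.def2} applied to the family $(f_i \circ g)_{i}$: since $\Sigma_i(f_i\circ g)$ is the unique morphism with this property, we must have $(\Sigma_i f_i)\circ g = \Sigma_i(f_i\circ g)$, which is (\ref{2015.12.24.eq4}).

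This argument is entirely formal and constructive, using only associativity in $C$ and the defining universal property, so it requires neither the inductive unfolding of Construction \ref{2015.12.24.constr1} nor any choice principle. I do not anticipate a genuine obstacle; the only point demanding minor care is to ensure that the uniqueness clause is applied to the \emph{correct} target family, namely $(f_i\circ g)_i$ with codomain $Z$, rather than to the original $(f_i)_i$. Everything else is the standard ``two maps out of a coproduct agreeing after precomposition with the coprojections are equal'' pattern, specialized to the ordered finite setting.
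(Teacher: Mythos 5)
Your proof is correct and follows essentially the same route as the paper: precompose both sides with $ii^X_j$, use associativity and the defining equation (\ref{2015.12.24.eq2}) to get $f_j\circ g$ in each case, and conclude by the uniqueness clause of Definition \ref{2015.12.24.def2}. The only (immaterial) difference is presentational — the paper states it as "the precompositions of both sides agree," while you single out $\Sigma_i(f_i\circ g)$ as the unique morphism with those precompositions.
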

\begin{proof}
By the uniqueness condition of Definition \ref{2015.12.24.def2} it is sufficient to show that for all $i=0,\dots, m-1$ the precompositions of both sides of (\ref{2015.12.24.eq4}) with $ii^X_i$ are equal. We have
$$ii^X_i\circ (\Sigma_i f_i)\circ g=f_i\circ g=ii^X_i\circ (f_i\circ g)$$
\end{proof}
\begin{lemma}
\llabel{2015.12.24.l4}
Let $C$ be a category with a finite ordered coproducts structure and $(X_0,\dots,X_{m-1})$ a sequence of objects of $C$. Then one has
$$\Sigma_{i=0}^{m-1} ii_i^X=Id_{\amalg_{i=0}^{m-1}X_i}$$
\end{lemma}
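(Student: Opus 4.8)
The plan is to prove this directly from the uniqueness clause of Definition \ref{2015.12.24.def2}, applied to a specific choice of the family of morphisms. The defining property of $\Sigma_{i=0}^{m-1}f_i$ is that, for any sequence $f_i:X_i\sr Y$, it is the \emph{unique} morphism $h:\amalg_{i=0}^{m-1}X_i\sr Y$ satisfying $ii^X_j\circ h=f_j$ for all $j=0,\dots,m-1$. The key observation is that the statement to be proved is simply this uniqueness in the special case where the target $Y$ is taken to be $\amalg_{i=0}^{m-1}X_i$ itself and each $f_i$ is taken to be the coprojection $ii^X_i$.

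First I would specialize Definition \ref{2015.12.24.def2} to $Y=\amalg_{i=0}^{m-1}X_i$ and $f_i=ii^X_i:X_i\sr\amalg_{i=0}^{m-1}X_i$. This is a legitimate instance of the definition, since the coprojections form a sequence of morphisms with common target $\amalg_{i=0}^{m-1}X_i$. Consequently the morphism $\Sigma_{i=0}^{m-1}ii^X_i$ is well defined, and by (\ref{2015.12.24.eq2}) it is characterized as the unique morphism $h:\amalg_{i=0}^{m-1}X_i\sr\amalg_{i=0}^{m-1}X_i$ such that $ii^X_j\circ h=ii^X_j$ for every $j$.

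Next I would exhibit $Id_{\amalg_{i=0}^{m-1}X_i}$ as a competitor satisfying the same system of equations. Indeed, for each $j=0,\dots,m-1$ the right identity axiom of $C$ gives
$$ii^X_j\circ Id_{\amalg_{i=0}^{m-1}X_i}=ii^X_j.$$
Thus both $\Sigma_{i=0}^{m-1}ii^X_i$ and $Id_{\amalg_{i=0}^{m-1}X_i}$ satisfy the condition that their precomposition with every $ii^X_j$ recovers $ii^X_j$.

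Finally I would invoke the uniqueness asserted in Definition \ref{2015.12.24.def2} to conclude that these two morphisms coincide, which is exactly the claimed equality. I do not expect any genuine obstacle here: the proof is a single application of the universal property, and the only point requiring (trivial) verification is that $Id$ solves the relevant equations, which follows from the identity axiom alone. In particular, no use is made of the explicit inductive Construction \ref{2015.12.24.constr1}, so the argument is valid for any finite ordered coproducts structure and remains entirely constructive.
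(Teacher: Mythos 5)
Your proof is correct and is exactly the paper's argument: the paper's proof consists of the single line ``It follows from the uniqueness part of Definition \ref{2015.12.24.def2}'', which is precisely the uniqueness application you spell out. You have merely made explicit the verification that the identity morphism satisfies the defining equations, which the paper leaves tacit.
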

\begin{proof}
It follows from the uniqueness part of Definition \ref{2015.12.24.def2}.
\end{proof}

\begin{definition}\llabel{2016.01.01.def1}
Let $(C,\amalg,ii_0,ii_1)$ and $(C',\amalg',ii_0',ii_1')$ be two categories with the binary coproducts structure. A functor $G:C\sr C'$ is said to strictly respect the binary coproduct structures if for all $X,Y\in C$ one has:
$$G(X\amalg Y)=G(X)\amalg' G(Y)$$
and
$$G(ii_0^{X,Y})=(ii_0')^{X,Y}$$
$$G(ii_1^{X,Y})=(ii_1')^{X,Y}$$
\end{definition}
\begin{definition}\llabel{2016.01.01.def2}
Let $(C,\amalg,ii_i)$ and $(C',\amalg',ii'_i)$ be two categories with finite ordered coproducts structures. A functor $G:C\sr C'$ is said to strictly respect the finite ordered coproducts structures if for all $n\in\nat$ and all sequences $X=(X_0,\dots,X_{m-1})$ one has
$$G(\amalg_{i=0}^mX_i)=(\amalg')_{i=0}^{m-1}G(X_i)$$
and for all $i=0,\dots,m-1$ one has
$$G(ii_i^X)=(ii')_i^{G(X)}$$
\end{definition}
\begin{lemma}
\llabel{2016.01.01.l3}
Let $(C,\amalg,ii_0,ii_1)$ and $(C',\amalg',ii_0',ii_1')$ be two categories with the binary coproducts structure and let $0$, $0'$ be initial objects in $C$ and $C'$ respectively. Let $G:C\sr C'$ be a functor. Then $G$ strictly respects the finite coproduct structure on $C$ and $C'$ defined by the initial object and the binary coproduct structure by Construction \ref{2015.12.24.constr1} if and only if one has:
\begin{enumerate}
\item $G(0)=0'$,
\item $G$ strictly respects the binary coproduct structure.
\end{enumerate}
\end{lemma}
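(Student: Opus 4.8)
The plan is to prove the two implications separately. The forward implication is a direct specialization of Definition \ref{2016.01.01.def2}, while the backward implication is an induction on $m$ that follows the inductive structure of Construction \ref{2015.12.24.constr1}.

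For the forward direction, suppose $G$ strictly respects the finite ordered coproduct structures. To get condition (1) I would specialize Definition \ref{2016.01.01.def2} to the empty sequence ($m=0$): both $\amalg_{i=0}^{-1}X_i$ and $(\amalg')_{i=0}^{-1}G(X_i)$ are, by Construction \ref{2015.12.24.constr1}, the chosen initial objects, so the object equation is exactly $G(0)=0'$. To get condition (2) I would specialize to a two-term sequence $X=(X_0,X_1)$ and invoke Lemma \ref{2016.01.03.l4} in $C$ and in $C'$: that lemma identifies the derived ordered coproduct of a two-term sequence with the given binary coproduct, both on objects ($\amalg_{i=0}^1 X_i=X_0\amalg X_1$) and on the two inclusions ($ii_0^X=ii_0^{X_0,X_1}$, $ii_1^X=ii_1^{X_0,X_1}$), and similarly in $C'$. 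Rewriting the three equations of Definition \ref{2016.01.01.def2} through these identifications yields $G(X_0\amalg X_1)=G(X_0)\amalg'G(X_1)$, $G(ii_0^{X_0,X_1})=(ii'_0)^{G(X_0),G(X_1)}$ and $G(ii_1^{X_0,X_1})=(ii'_1)^{G(X_0),G(X_1)}$, which is precisely Definition \ref{2016.01.01.def1}.

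For the backward direction, assume (1) and (2) and argue by induction on $m$ that $G$ strictly respects the ordered coproduct on every sequence. The base cases are immediate: for $m=0$ the required object equation is $G(0)=0'$, which is (1), and there are no inclusions to check; for $m=1$ both ordered coproducts are the single object and both inclusions are identities, so the claim follows from $G$ preserving identities. For the successor step I would unfold the recursive clauses of Construction \ref{2015.12.24.constr1}. On objects, $G(\amalg_{i=0}^m X_i)=G((\amalg_{i=0}^{m-1}X_i)\amalg X_m)$ equals $G(\amalg_{i=0}^{m-1}X_i)\amalg'G(X_m)$ by (2), then equals $((\amalg')_{i=0}^{m-1}G(X_i))\amalg'G(X_m)$ by the inductive hypothesis, and this is $(\amalg')_{i=0}^m G(X_i)$ by the construction applied in $C'$. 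On inclusions I would split into the two cases of the construction: for $j=m$, $ii^X_m$ is the binary inclusion $ii_1$ into $(\amalg_{i=0}^{m-1}X_i)\amalg X_m$, so its image under $G$ is the matching binary inclusion in $C'$ by (2); for $j<m$, $ii^X_j$ is the composite $ii^{X'}_j\circ ii_0^{\amalg_{i=0}^{m-1}X_i,X_m}$, so functoriality turns $G(ii^X_j)$ into the composite of $G(ii^{X'}_j)$, handled by the inductive hypothesis, and $G(ii_0^{\amalg_{i=0}^{m-1}X_i,X_m})$, handled by (2); the two combine to the defining composite of $(ii')_j^{G(X)}$ in $C'$.

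The only real obstacle is bookkeeping of the object-level equalities, so that the morphism equalities are even well-typed. Each binary inclusion in $C'$ carries superscripts naming its two summands, and for $G(ii_0^{\amalg_{i=0}^{m-1}X_i,X_m})$ to equal $(ii'_0)^{(\amalg')_{i=0}^{m-1}G(X_i),\,G(X_m)}$ one must already know the object identity $G(\amalg_{i=0}^{m-1}X_i)=(\amalg')_{i=0}^{m-1}G(X_i)$. I would therefore establish the object part of the successor step before the inclusion part, so that the inductive hypothesis on objects is available to rewrite the codomains and superscripts throughout; with that ordering in place, the remaining verifications are forced purely by functoriality of $G$ and by hypothesis (2).
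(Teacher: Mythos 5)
Your proposal is correct and follows the same route as the paper: the ``only if'' direction by specializing the ordered-coproduct condition to the empty and two-term sequences (the latter via Lemma \ref{2016.01.03.l4}), and the ``if'' direction by induction on the length of the sequence, unfolding Construction \ref{2015.12.24.constr1}. Your additional remark about establishing the object-level equalities first so that the inclusion equalities are well-typed is a sound refinement of the paper's (very terse) argument.
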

\begin{proof}
The "only if" part follows from the fact that the initial objects of $C$ and $C'$  defined by the finite ordered coproducts structure of Construction \ref{2015.12.24.constr1} are $0$ and $0'$ and Lemma \ref{2016.01.03.l4}.

The proof of the "if" part is easy by induction on the length of the sequence $X=(X_0,\dots,X_m)$ of Definition \ref{2016.01.01.def2}. 
\end{proof}
\begin{remark}\rm\llabel{2016.01.05.rem1}
It is not true in general that a finite ordered coproducts structure is determined by the corresponding initial object and the binary coproducts structure. In particular, the converse of Lemma \ref{2016.01.01.l3} is false - a functor that strictly respects the initial object and the binary coproducts structure defined by a finite ordered coproducts structure need not strictly respect the finite ordered coproducts structure itself.
\end{remark}
\begin{lemma}\llabel{2016.01.01.l6}
Let $(C,\amalg,ii_i)$ and $(C',\amalg',ii'_i)$ be two categories with finite ordered coproducts structures and $G:C\sr C'$ a functor that strictly respect the finite ordered coproducts structures.

Let $X=(X_0,\dots,X_{m-1})$ be a sequence of objects of $C$ and $f_i:X_i\sr Y$ a sequence of morphisms. Then one has
\begin{eq}\llabel{2016.01.01.eq2}
G(\Sigma_{i=0}^{m-1} f_i)=\Sigma_{i=0}^{m-1} G(f_i)
\end{eq}
where the $\Sigma$ on the left is with respect to $(\amalg,ii_i)$ and $\Sigma$ on the right is with respect to $(\amalg',ii'_i)$.
\end{lemma}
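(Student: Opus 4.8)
The plan is to invoke the uniqueness clause in the universal property of the finite ordered coproduct on $C'$ (Definition \ref{2015.12.24.def2}) and thereby reduce the claimed equality to a one-line computation using functoriality together with the two strict-respect conditions of Definition \ref{2016.01.01.def2}. No induction on $m$ is required, since the universal property is available directly for each length.

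First I would check that both sides of (\ref{2016.01.01.eq2}) are morphisms with the same source and target, so that the comparison even makes sense. The right-hand side $\Sigma_{i=0}^{m-1} G(f_i)$ is by construction a morphism $(\amalg')_{i=0}^{m-1} G(X_i)\sr G(Y)$. The left-hand side $G(\Sigma_{i=0}^{m-1} f_i)$ has source $G(\amalg_{i=0}^{m-1}X_i)$ and target $G(Y)$, and the object part of the hypothesis gives $G(\amalg_{i=0}^{m-1}X_i)=(\amalg')_{i=0}^{m-1}G(X_i)$, so the two sources coincide. This is the one place where the object clause of Definition \ref{2016.01.01.def2} is genuinely used.

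Next, by the uniqueness part of Definition \ref{2015.12.24.def2} applied in $C'$ to the sequence $G(f_i):G(X_i)\sr G(Y)$, it suffices to show that precomposing $G(\Sigma_{i=0}^{m-1}f_i)$ with each inclusion $(ii')_j^{G(X)}$ returns $G(f_j)$. Using the inclusion part of the strict-respect hypothesis, $G(ii_j^X)=(ii')_j^{G(X)}$, then functoriality of $G$ (in diagrammatic order), then the defining equation (\ref{2015.12.24.eq2}) for $\Sigma_{i=0}^{m-1}f_i$ in $C$, I would compute
$$(ii')_j^{G(X)}\circ G(\Sigma_{i=0}^{m-1}f_i)=G(ii_j^X)\circ G(\Sigma_{i=0}^{m-1}f_i)=G(ii_j^X\circ \Sigma_{i=0}^{m-1}f_i)=G(f_j).$$
Since $\Sigma_{i=0}^{m-1}G(f_i)$ is, by Definition \ref{2015.12.24.def2}, the unique morphism out of $(\amalg')_{i=0}^{m-1}G(X_i)$ satisfying exactly these equations, the two morphisms must agree, which is (\ref{2016.01.01.eq2}).

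There is no substantial obstacle here: the argument is a direct application of the universal property and functoriality. The only points demanding care are bookkeeping — confirming that the sources match (where the object clause enters) and keeping the composition in the diagrammatic order of the paper when applying $G$ to a composite.
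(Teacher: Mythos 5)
Your proposal is correct and follows essentially the same route as the paper's own proof: identify the common source and target via the object clause, invoke the uniqueness part of Definition \ref{2015.12.24.def2}, and verify the defining equations by the chain $(ii')_j^{G(X)}\circ G(\Sigma f_i)=G(ii_j^X)\circ G(\Sigma f_i)=G(ii_j^X\circ \Sigma f_i)=G(f_j)$. Nothing further is needed.
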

\begin{proof}
Both the left and the right hand side of (\ref{2016.01.01.eq2}) are morphisms from $\amalg_{i=0}^{m-1}G(X_i)$ to $G(Y)$ according to the Definition \ref{2016.01.01.def2}. The right hand side is the unique morphism with these domain and codomain such that for all $i=0,\dots,m-1$ its pre-composition with $(ii')_i^{G(X)}$ equals $G(f_i)$. It remains to show that the same property holds for the right hand side. We have
$$(ii')_i^{G(X)}\circ G(\Sigma_{i=0}^{m-1} f_i)=G(ii_i^X)\circ G(\Sigma_{i=0}^{m-1} f_i)=G(ii_i^X\circ \Sigma_{i=0}^{m-1} f_i)=G(f_i)$$.
The lemma is proved. 
\end{proof}

\subsection{More on the category $F$}

Following \cite{FPT} we let $F$ denote the category with the set of objects $\nat$ and the set of morphisms from $m$ to $n$ being $Fun(stn(m),stn(n))$, where $stn(m)=\{i\in\nat\,|\,i<m\}$ is our choice for the standard set with $m$ elements (cf. \cite{LandC}).

For $m,n\in\nat$ let $ii_0^{m,n}:stn(m)\sr stn(m+n)$ and $ii_1^{m,n}:stn(n)\sr stn(m+n)$ be the injections of the initial segment of length $m$ and the concluding segment of length $n$.
\begin{lemma}
\llabel{2016.01.03.l2}
One has:
\begin{enumerate}
\item $0$ is the initial object of $F$,
\item the function 
$$(m,n)\mapsto (m+n, ii_0^{m,n}, ii_1^{m,n})$$
is a binary coproduct structure on $F$.
\end{enumerate}
\end{lemma}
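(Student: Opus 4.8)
The plan is to verify each clause directly from the description of $F$ as the category of functions between standard finite sets $stn(m)=\{i\in\nat\,|\,i<m\}$, taking care that every step is constructively valid. For the first clause I would observe that $stn(0)$ is empty, so a morphism $0\sr n$ is a function from the empty set to $stn(n)$; there is exactly one such function — the empty function — for every $n$, which is precisely the assertion that $0$ is an initial object of $F$. No choice or excluded middle enters here.

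For the second clause I would first record the conventions that $ii_0^{m,n}$ sends $j\in stn(m)$ to $j\in stn(m+n)$ and $ii_1^{m,n}$ sends $j\in stn(n)$ to $m+j\in stn(m+n)$. Then, to check the universal property of Definition \ref{2015.12.20.def1}, fix an object $W=w$ and morphisms $f_X:stn(m)\sr stn(w)$ and $f_Y:stn(n)\sr stn(w)$. The geometric fact underlying the construction is that every $k\in stn(m+n)$ lies either in the initial segment ($k<m$) or in the concluding segment ($m\le k<m+n$), and these alternatives are mutually exclusive and exhaustive. I would accordingly define $s=\Sigma(f_X,f_Y):stn(m+n)\sr stn(w)$ by setting $s(k)=f_X(k)$ for those $k$ with $k<m$ and $s(k)=f_Y(k-m)$ for those $k$ with $k\ge m$. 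The one point deserving explicit comment in the constructive setting is that this case split defines an honest function: it is legitimate because the order relation $<$ on $\nat$ is decidable, so for each $k$ and $m$ one can effectively choose the correct branch without invoking the excluded middle.

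With $s$ in hand the two defining equations are immediate. Recalling that composition is diagrammatic, for $j<m$ one has $(ii_0^{m,n}\circ s)(j)=s(j)=f_X(j)$, and for $j\in stn(n)$ one has $(ii_1^{m,n}\circ s)(j)=s(m+j)=f_Y(j)$; hence $ii_0^{m,n}\circ s=f_X$ and $ii_1^{m,n}\circ s=f_Y$. For uniqueness I would suppose $g:stn(m+n)\sr stn(w)$ also satisfies $ii_0^{m,n}\circ g=f_X$ and $ii_1^{m,n}\circ g=f_Y$ and show $g=s$ pointwise: for $k<m$ the first equation forces $g(k)=f_X(k)=s(k)$, while for $m\le k<m+n$, writing $k=m+j$, the second forces $g(k)=f_Y(j)=s(k)$; the decidable comparison again guarantees that these two cases cover every $k$, so $g=s$ as functions and hence as morphisms of $F$.

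I do not expect a serious obstacle, since the statement is a direct unwinding of the definitions; the only place demanding attention is precisely the constructive bookkeeping. Specifically, the partition of $stn(m+n)$ into initial and concluding segments — and therefore both the definition of $\Sigma(f_X,f_Y)$ and the exhaustiveness step in the uniqueness argument — must be seen to rest only on the decidability of $<$ on $\nat$ rather than on any instance of the excluded middle, in keeping with the constructive standards of the paper.
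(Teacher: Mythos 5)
Your proof is correct, and for the second clause it takes a genuinely different route from the paper. The paper handles part (1) exactly as you do (the empty function is the unique map out of $stn(0)$), but for part (2) it proceeds by induction on $n$, reducing the binary coproduct claim to the single case $n=1$ (adjoining one point to $stn(m)$) and then arguing directly from the set-theoretic definition of a function as a graph. You instead construct $\Sigma(f_X,f_Y)$ in one step by the case split $k<m$ versus $k\ge m$, and you correctly identify the only constructively delicate point: that this case split, and the exhaustiveness step in the uniqueness argument, rest on the decidability of $<$ on $\nat$ rather than on the excluded middle. This is a legitimate and arguably cleaner argument --- it makes the constructive content explicit where the paper leaves it implicit in the phrase ``direct reasoning involving the details of the set-theoretic definition of a function'' --- whereas the paper's inductive reduction buys a base case so small that the required disjoint decomposition of $stn(m+1)$ is nearly immediate. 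Note that even the paper's route ultimately needs the same decidable trichotomy on $\nat$ to see that the union of the two graphs is total and single-valued, so the two proofs rest on the same underlying fact; yours just uses it once and uniformly instead of threading it through an induction.
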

\begin{proof}
We have $stn(0)=\emptyset$ and there is a unique function from $\emptyset$ to any other set.

The second assertion can be reduced to the case $n=1$ by induction on $n$ and then proved by direct reasoning involving the details of the set-theoretic definition of a function.
\end{proof}
\begin{definition}
\llabel{2016.01.03.d1}
The binary coproducts structure on $F$ defined by Lemma \ref{2016.01.03.l2} is called the standard binary coproducts structure. 

The finite ordered coproducts structure on $F$ defined by Lemma \ref{2016.01.03.l2} and Construction \ref{2015.12.24.constr1} is called the standard finite ordered coproducts structure.
\end{definition}
\begin{example}
\llabel{2016.01.03.ex1}\rm
There are binary coproducts structures on $F$ that are different from the standard binary coproducts structure. For example, the function that is equal to the standard binary coproducts structure on all pairs $(m,n)$ other than $(1,1)$ and such that $1\amalg 1=2$, $ii_0^{1,1}(0)=1$ and $ii_1^{1,1}=0$ is a binary coproducts structure on $F$ that is not equal to the standard one.
\end{example}
\begin{remark}\rm
\llabel{2016.01.03.rem1} 
It is easy to define the concept of a finite coproducts structure on a category. The only non-trivial choice one has to make is which of the definitions of a finite set to use and it is reasonable to define a finite set as a set for which there exists, in the ordinary logical sense, $m\in\nat$ and a bijection from $stn(m)$ to this set.

One can show then that it is impossible to construct a finite coproducts structure on $F$ without using the axiom of choice. Indeed, one would have to define for each finite set $I$ and a function $X:I\sr \nat$ the coproduct object $\amalg X=\amalg_{i\in I}X(i)\in\nat$ and a family of functions
$$ii_i^X:stn(X(i))\sr stn(\amalg X)$$
for $i\in I$ such that for any $n$ the function
$$Fun(stn(\amalg X), stn(n))\sr \prod_{i\in I}Fun(stn(X(i)),stn(\amalg X))$$
defined by this family is a bijection. The latter condition is easily shown to be equivalent to the condition that
$$stn(\amalg X)=\amalg_{i\in I}Im(ii_i^X)$$
One can also prove that if such a structure exists then $\amalg X=\Sigma_{i\in I} X(i)$ where the sum on the right is the usual commutative sum in $\nat$. Consider the case when $I$ is a set with $2$ elements and $X(i)=1$ for all $i\in I$. Then $\amalg X = 2$ and $ii_i^X:stn(1)\sr stn(2)$ are functions whose images do not intersect and cover $stn(2)$. Then the function $i\mapsto ii_i^X(0)$ is a bijection from $I$ to $stn(2)$, i.e., we have found a canonical bijection from any finite set with 2 elements to $stn(2)$. This amounts to a particular case of the axiom of choice for the proper class of all sets with $2$ elements or, if we consider finite coproducts relative to a universe $U$, for the set of sets with $2$ elements in $U$.  
\end{remark}
\begin{lemma}
\llabel{2016.01.03.l3}
Consider $F$ with the standard finite ordered coproducts structure. Then for any $m\in\nat$, $n_0,\dots,n_{m-1}\in\nat$ one has:
\begin{enumerate}
\item$\amalg_{i=0}^{m-1}n_i=\Sigma_{i=0}^{m-1}n_i$,
\item for each $i=0,\dots, m-1$ and $j=0,\dots, k_i-1$ one has
$$ii_i^{(n_0,\dots,n_{m-1})}(j)=(\Sigma_{l=0}^{i-1}n_l)+j$$
In particular, $ii_i^{(1,\dots,1)}(0)=i$.
\end{enumerate}
\end{lemma}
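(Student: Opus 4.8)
The plan is to prove both assertions by induction on $m$, unfolding Construction \ref{2015.12.24.constr1} together with the explicit description of the standard binary coproducts structure on $F$ from Lemma \ref{2016.01.03.l2}, under which the binary coproduct object is $m+n$, the map $ii_0^{m,n}$ is the initial-segment inclusion $c\mapsto c$, and $ii_1^{m,n}$ is the concluding-segment inclusion $c\mapsto m+c$.

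First I would establish part 1 by its own induction on $m$. The cases $m=0$ and $m=1$ are immediate: the empty coproduct is the initial object $0$, which equals the empty sum, and for $m=1$ the coproduct is $n_0=\Sigma_{i=0}^{0}n_i$. For the successor step, Construction \ref{2015.12.24.constr1} gives $\amalg_{i=0}^{m}n_i=(\amalg_{i=0}^{m-1}n_i)\amalg n_m$, and since the binary coproduct in $F$ is addition (Lemma \ref{2016.01.03.l2}), the inductive hypothesis yields $(\Sigma_{i=0}^{m-1}n_i)+n_m=\Sigma_{i=0}^{m}n_i$.

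Next I would prove part 2 by induction on $m$, using part 1 freely so as to identify the intermediate coproduct object $\amalg_{l=0}^{m-1}n_l$ with the numeral $a:=\Sigma_{l=0}^{m-1}n_l$; this identification is what makes the binary injections $ii_0^{a,n_m}$ and $ii_1^{a,n_m}$ of Lemma \ref{2016.01.03.l2} available in the form needed. In the successor step Construction \ref{2015.12.24.constr1} splits into two branches. For $i<m$ one has $ii_i^{X}=ii_i^{X'}\circ ii_0^{a,n_m}$ with $X'=(n_0,\dots,n_{m-1})$; since $ii_0^{a,n_m}$ is the value-preserving initial-segment inclusion, the diagrammatic composition convention gives $ii_i^{X}(j)=ii_0^{a,n_m}(ii_i^{X'}(j))=ii_i^{X'}(j)$, and the inductive hypothesis evaluates this to $(\Sigma_{l=0}^{i-1}n_l)+j$. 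For $i=m$ one has $ii_m^{X}=ii_1^{a,n_m}$, which sends $j$ to $a+j=(\Sigma_{l=0}^{m-1}n_l)+j$. Both branches match the asserted formula, and the special case follows by taking all $n_l=1$, whence $\Sigma_{l=0}^{i-1}1=i$ and $j=0$.

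I do not expect a genuine obstacle here: the content is entirely a bookkeeping unfolding of the successor clause of Construction \ref{2015.12.24.constr1}. The only points requiring care are respecting the diagrammatic order of composition (so that the outer map $ii_0^{a,n_m}$ is the one applied last and hence preserves values) and keeping straight which of the two segment inclusions, $ii_0$ or $ii_1$, governs each branch of the induction.
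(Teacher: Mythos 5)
Your proposal is correct and follows exactly the route the paper takes: its proof of this lemma is the single line ``By induction on $m$ using Construction \ref{2015.12.24.constr1}'', and your argument is just that induction carried out explicitly, with the binary injections of Lemma \ref{2016.01.03.l2} unfolded and the diagrammatic composition order handled correctly. No discrepancy to report.
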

\begin{proof}
By induction on $m$ using Construction \ref{2015.12.24.constr1}.
\end{proof}

\subsection{Lawvere theories}
Lawvere theories were introduced in \cite{Lawvere}. Let us remind an equivalent but more direct definition here.
\begin{definition}
\llabel{2015.11.24.def1}
A Lawvere theory structure on a category $T$ is a functor $L:F\sr T$ such that the following conditions hold:
\begin{enumerate}
\item $L$ is a bijection on the sets of objects,
\item $L(0)$ is an initial object of $T$,
\item for any $m,n\in\nat$ the square
$$
\begin{CD}
L(0) @>>> L(n)\\
@VVV @VVL(ii_1^{m,n}) V\\
L(m) @>L(ii_0^{m,n})>> L(m+n)
\end{CD}
$$
is a push-out square.
\end{enumerate}
A Lawvere theory is a pair $(T,L)$ where $T$ is a category and $L$ is a Lawvere theory structure on $T$. 
\end{definition}

\begin{lemma}
\llabel{2015.12.24.l3}
A functor $L:F\sr T$ is a Lawvere structure on $T$ if an only if it is bijective on objects, $L(0)$ is an initial object of $T$ and the function 
$$(X,Y)\sr (L(L^{-1}(X)+L^{-1}(Y)), L(ii_0^{L^{-1}(X),L^{-1}(Y)}), L(ii_1^{L^{-1}(X),L^{-1}(Y)}))$$
is a binary coproducts structure on $T$.
\end{lemma}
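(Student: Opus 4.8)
The plan is to observe that both sides of the asserted equivalence share the two conditions that $L$ be bijective on objects and that $L(0)$ be an initial object of $T$; granting these, it remains only to show that condition (3) of Definition~\ref{2015.11.24.def1} (the push-out condition) is equivalent to the assertion that the displayed function is a binary coproducts structure in the sense of Definition~\ref{2015.12.20.def1}. First I would note that the bijectivity of $L$ on objects is precisely what makes the displayed function defined on all pairs $(X,Y)$ of objects of $T$: writing $m=L^{-1}(X)$ and $n=L^{-1}(Y)$, every pair arises as $(L(m),L(n))$, and the function assigns to it the triple $(L(m+n),\,L(ii_0^{m,n}),\,L(ii_1^{m,n}))$.

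The heart of the argument is the standard fact that a push-out over an initial object is the same thing as a binary coproduct. Concretely, consider the square of condition (3). Since $L(0)$ is initial, the two maps $L(0)\sr L(m)$ and $L(0)\sr L(n)$ emanating from the corner are the unique morphisms out of the initial object, so the square commutes automatically (both composites $L(0)\sr L(m+n)$ equal the unique map $L(0)\sr L(m+n)$). More importantly, for any object $W$ and any pair of morphisms $g:L(m)\sr W$, $h:L(n)\sr W$, the cocone compatibility condition required by the definition of a push-out — that the two composites $L(0)\sr W$ through $g$ and through $h$ agree — is likewise automatic, since both equal the unique morphism $L(0)\sr W$. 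Hence the universal property of the push-out square reduces precisely to: there exists a unique $u:L(m+n)\sr W$ with $L(ii_0^{m,n})\circ u=g$ and $L(ii_1^{m,n})\circ u=h$. This is verbatim the existence-and-uniqueness clause of Definition~\ref{2015.12.20.def1} for the triple $(L(m+n),L(ii_0^{m,n}),L(ii_1^{m,n}))$, with $\Sigma(g,h)=u$.

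Given this identification, both implications are immediate. For the ``only if'' direction, if $L$ is a Lawvere theory structure then condition (3) holds for every pair $(m,n)$, and the above shows that the assigned triples satisfy the defining property of a binary coproducts structure, so the displayed function is one. For the ``if'' direction, if the displayed function is a binary coproducts structure then for each $(m,n)$ the triple $(L(m+n),L(ii_0^{m,n}),L(ii_1^{m,n}))$ enjoys the coproduct universal property, which — again because $L(0)$ is initial — is exactly the statement that the square of condition (3) is a push-out.

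I expect the only delicate point to be the bookkeeping around the compatibility condition in the definition of a push-out: one must check carefully that the hypothesis ``$L(0)$ is initial'' renders this condition vacuous, so that the push-out property and the coproduct property have literally the same content. Once this is made precise (using that any two parallel morphisms out of an initial object coincide), the remainder is a direct transcription between Definition~\ref{2015.11.24.def1} and Definition~\ref{2015.12.20.def1}, with attention to the diagrammatic order of composition used throughout the paper.
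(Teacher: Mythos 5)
Your proof is correct and takes the same route as the paper, whose own proof is just the one-line remark that the claim ``follows by unfolding definitions and rewriting the equalities $L(L^{-1}(X))=X$ and $L^{-1}(L(n))=n$.'' You have simply made explicit the content of that unfolding --- in particular the key observation that initiality of $L(0)$ renders the push-out compatibility condition vacuous, so that the push-out and binary-coproduct universal properties coincide --- which is exactly the argument the paper leaves to the reader.
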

\begin{proof}
It follows by unfolding definitions and rewriting the equalities $L(L^{-1}(X))=X$ and $L^{-1}(L(n))=n$.
\end{proof}
\begin{definition}
\llabel{2016.01.03.def2}
Let $(T,L)$ be a Lawvere theory. The binary coproducts structure on $T$ defined in Lemma \ref{2015.12.24.l3} is called the standard binary coproducts structure defined by (the Lawvere theory structure) $L$. 

The finite ordered coproducts structure on $T$ defined by the initial object $L(0)$ and the standard binary coproducts structure on $T$ by Construction \ref{2015.12.24.constr1} is called the standard finite ordered coproducts structure defined by $L$.
\end{definition}
Everywhere below, unless the opposite is explicitly stated, we consider, for a Lawvere theory $(T,L)$ the category $T$ with the standard binary coproduct and finite ordered coproduct structures.
\begin{lemma}\llabel{2015.12.24.l5b}
Let $(T,L)$ be a Lawvere theory. Then $L$ strictly respects the standard finite coproduct structures on $F$ and $T$, i.e., for any $m\in\nat$, $n_0,\dots,n_{m-1}\in \nat$ one has:
\begin{enumerate}
\item $\amalg_{i=0}^{m-1} L(n_i)=L(\Sigma_{i=0}^{m-1} n_i)$,
\item for any $i=0,\dots,m-1$, 
$$L(ii_i^{(n_0,\dots,n_{m-1})})=ii_i^{(L(n_0),\dots,L(n_{m-1}))}$$
\end{enumerate}
\end{lemma}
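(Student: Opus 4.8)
The plan is to reduce the statement to Lemma \ref{2016.01.01.l3}. By Definition \ref{2016.01.03.d1} the standard finite ordered coproduct structure on $F$ is produced by Construction \ref{2015.12.24.constr1} from the initial object $0$ and the standard binary coproduct structure of Lemma \ref{2016.01.03.l2}, and by Definition \ref{2016.01.03.def2} the standard finite ordered coproduct structure on $T$ is produced by the same Construction \ref{2015.12.24.constr1} from the initial object $L(0)$ and the standard binary coproduct structure of Lemma \ref{2015.12.24.l3}. Hence both structures are exactly of the form to which Lemma \ref{2016.01.01.l3} applies, with $C=F$, $C'=T$, $G=L$ and $0'=L(0)$. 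That lemma reduces the desired conclusion to checking (i) that $L$ sends the initial object of $F$ to the chosen initial object of $T$, and (ii) that $L$ strictly respects the two binary coproduct structures in the sense of Definition \ref{2016.01.01.def1}.

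Condition (i) is immediate: the initial object chosen for the finite ordered coproduct structure on $T$ is $L(0)$ by Definition \ref{2016.01.03.def2}, and $L(0)$ is genuinely an initial object of $T$ by part 2 of Definition \ref{2015.11.24.def1}, so $L(0)=L(0)$ holds trivially. For condition (ii) I would unfold the standard binary coproduct structure on $T$ from Lemma \ref{2015.12.24.l3}: for objects $m,n$ of $F$, evaluating that structure at the pair $(L(m),L(n))$ and rewriting $L^{-1}(L(m))=m$, $L^{-1}(L(n))=n$ (using that $L$ is bijective on objects) yields
$$L(m)\amalg' L(n)=L(m+n),\ssp (ii_0')^{L(m),L(n)}=L(ii_0^{m,n}),\ssp (ii_1')^{L(m),L(n)}=L(ii_1^{m,n}).$$
Since in $F$ the binary coproduct of $m$ and $n$ is $m+n$ with injections $ii_0^{m,n}$ and $ii_1^{m,n}$, these three equalities are precisely $L(m\amalg n)=L(m)\amalg' L(n)$, $L(ii_0^{m,n})=(ii_0')^{L(m),L(n)}$ and $L(ii_1^{m,n})=(ii_1')^{L(m),L(n)}$, i.e. the three conditions of Definition \ref{2016.01.01.def1}. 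Thus (ii) holds essentially by the definition of the standard binary coproduct structure on $T$.

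Having established (i) and (ii), Lemma \ref{2016.01.01.l3} gives that $L$ strictly respects the finite ordered coproduct structures, i.e. the two clauses of Definition \ref{2016.01.01.def2} hold. Clause 2 of the present lemma is literally the injection-preservation clause of Definition \ref{2016.01.01.def2} taken at the sequence $X=(n_0,\dots,n_{m-1})$. For clause 1 I would additionally invoke Lemma \ref{2016.01.03.l3}, which identifies the finite ordered coproduct $\amalg_{i=0}^{m-1}n_i$ in $F$ with the numerical sum $\Sigma_{i=0}^{m-1}n_i$; combining this with the coproduct-preservation clause of Definition \ref{2016.01.01.def2} gives $(\amalg')_{i=0}^{m-1}L(n_i)=L(\amalg_{i=0}^{m-1}n_i)=L(\Sigma_{i=0}^{m-1}n_i)$, as required. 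I do not expect a genuine obstacle: the only content is recognizing that Lemma \ref{2016.01.01.l3} is applicable, and the sole points requiring care are the bookkeeping $L^{-1}(L(n))=n$ and the sum-versus-coproduct identification in $F$, together with keeping the object-indexing of the binary coproduct injections on $T$ (by $L(m),L(n)$) aligned with that on $F$.
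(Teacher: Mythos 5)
Your proof is correct, but it takes a different route from the paper's. The paper proves Lemma \ref{2015.12.24.l5b} by a direct induction on $m$ unfolding Construction \ref{2015.12.24.constr1}, essentially redoing the inductive argument in the specific case at hand. You instead factor the whole statement through the ``if'' direction of the general Lemma \ref{2016.01.01.l3}, reducing everything to the two finite checks that $L(0)$ is the chosen initial object of $T$ and that $L$ strictly respects the binary coproduct structures --- the latter being true essentially by the definition of the standard binary coproduct structure on $T$ in Lemma \ref{2015.12.24.l3} after rewriting $L^{-1}(L(n))=n$ --- and then translating back to the stated form via Lemma \ref{2016.01.03.l3}(1). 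This is exactly the strategy the paper itself uses for the analogous statement about morphisms of Lawvere theories (Lemma \ref{2016.01.01.l5}), so your argument is consistent with the paper's own methodology; what it buys is that the induction is done once, inside Lemma \ref{2016.01.01.l3}, rather than repeated, at the modest cost of the bookkeeping needed to verify that the hypotheses of that lemma are met. Both arguments are sound and rest on the same underlying inductive structure of Construction \ref{2015.12.24.constr1}.
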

\begin{proof}
Simple by induction on $m$ using the explicit form of Construction \ref{2015.12.24.constr1}.
\end{proof}
\begin{lemma}\llabel{2016.01.05.l1}
Let $(T,L)$ be a Lawvere theory and let $u\in Fun(stn(m),stn(n))$. Then one has
$$L(u)=\Sigma_{i=0}^{m-1}ii^{(L(1),\dots,L(1))}_{u(i)}$$
\end{lemma}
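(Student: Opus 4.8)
The plan is to invoke the uniqueness clause of the universal property of the finite ordered coproduct (Definition \ref{2015.12.24.def2}) together with the strict compatibility of $L$ with the finite ordered coproduct structures established in Lemma \ref{2015.12.24.l5b}. Specializing to the constant sequence $(1,\dots,1)$ of length $m$ and using $\Sigma_{i=0}^{m-1}1=m$, Lemma \ref{2015.12.24.l5b}(1) gives $\amalg_{i=0}^{m-1}L(1)=L(m)$, and similarly $\amalg_{j=0}^{n-1}L(1)=L(n)$. Hence both $L(u)$ and the right-hand side $\Sigma_{i=0}^{m-1}ii^{(L(1),\dots,L(1))}_{u(i)}$ are morphisms $L(m)\sr L(n)$, and the latter is, by definition, the unique morphism whose pre-composition with $ii_j^{(L(1),\dots,L(1))}$ equals $ii^{(L(1),\dots,L(1))}_{u(j)}$. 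So it suffices to verify, for each $j=0,\dots,m-1$,
$$ii_j^{(L(1),\dots,L(1))}\circ L(u)=ii^{(L(1),\dots,L(1))}_{u(j)}.$$

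First I would rewrite the injections on $T$ as images under $L$ of injections in $F$. By Lemma \ref{2015.12.24.l5b}(2) applied to $(1,\dots,1)$ one has $ii_j^{(L(1),\dots,L(1))}=L(ii_j^{(1,\dots,1)})$, and by Lemma \ref{2016.01.03.l3}(2) the morphism $ii_j^{(1,\dots,1)}:stn(1)\sr stn(m)$ is exactly the function sending $0\mapsto j$ (the point inclusion at $j$). Likewise $ii^{(L(1),\dots,L(1))}_{u(j)}=L(ii_{u(j)}^{(1,\dots,1)})$, where $ii_{u(j)}^{(1,\dots,1)}:stn(1)\sr stn(n)$ is the point inclusion at $u(j)$.

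The key step is then a purely functorial computation in $F$. Using functoriality of $L$ and the diagrammatic composition convention,
$$ii_j^{(L(1),\dots,L(1))}\circ L(u)=L(ii_j^{(1,\dots,1)})\circ L(u)=L(ii_j^{(1,\dots,1)}\circ u).$$
The composite $ii_j^{(1,\dots,1)}\circ u:stn(1)\sr stn(n)$ sends $0\mapsto j\mapsto u(j)$, so as a morphism of $F$ it coincides with the point inclusion $ii_{u(j)}^{(1,\dots,1)}$ at $u(j)$. Applying $L$ and Lemma \ref{2015.12.24.l5b}(2) once more identifies this with $ii^{(L(1),\dots,L(1))}_{u(j)}$, which is exactly the required equality; by uniqueness the lemma follows.

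The main obstacle I anticipate is bookkeeping rather than anything conceptual: one must correctly identify $ii_j^{(1,\dots,1)}$ with the point inclusion $0\mapsto j$ (which is precisely Lemma \ref{2016.01.03.l3}(2)), and keep careful track of the diagrammatic composition order so that $ii_j^{(1,\dots,1)}\circ u$ evaluates to $u(j)$ and not to some inverse. Everything else reduces to functoriality of $L$, the universal property in Definition \ref{2015.12.24.def2}, and the strict compatibility already recorded in Lemma \ref{2015.12.24.l5b}.
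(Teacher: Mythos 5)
Your proposal is correct and follows essentially the same route as the paper's proof: reduce to checking pre-compositions with the injections $ii_j^{(L(1),\dots,L(1))}$ via the uniqueness clause of the coproduct, rewrite these as $L(ii_j^{(1,\dots,1)})$ using Lemma \ref{2015.12.24.l5b}, and verify $ii_j^{(1,\dots,1)}\circ u=ii_{u(j)}^{(1,\dots,1)}$ in $F$ by evaluating at $0$ with Lemma \ref{2016.01.03.l3}. No substantive differences.
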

\begin{proof}
Both sides of the equality are morphisms from $L(m)$ to $L(n)$ in $T$. Since by Lemma \ref{2015.12.24.l5b}(1) $L(m)$ is the finite coproduct of the sequence $(L(1),\dots,L(1))$ to prove that two morphisms from $L(m)$ are equal it is sufficient to prove that their pre-compositions with $ii^{(L(1),\dots,L(1))}_i$ are equal for all $i=0,\dots,m-1$. We have
$$ii^{(L(1),\dots,L(1))}_i\circ \Sigma_{i=0}^{m-1}ii^{(L(1),\dots,L(1))}_{u(i)}=ii^{(L(1),\dots,L(1))}_{u(i)}=L(ii^{(1,\dots,1)}_{u(i)})$$
and
$$ii^{(L(1),\dots,L(1))}_i\circ L(u)=L(ii^{(1,\dots,1)}_i)\circ L(u)=L(ii^{(1,\dots,1)}_i\circ u)$$
It remains to show that
$$ii^{(1,\dots,1)}_{u(i)}=ii^{(1,\dots,1)}_i\circ u$$
in $F$. Since both sides are functions from $stn(1)$ it is sufficient to prove that their values on $0$ are equal. This follows from Lemma \ref{2016.01.03.l3}. 
\end{proof}

Recall that a morphism of Lawvere theories $G:(T,L)\sr (T',L')$ is a functor $G:T\sr T'$ such that $L\circ G=L'$.
\begin{lemma}
\llabel{2015.01.01.l4}
Let $G:(T,L)\sr (T',L')$ be a morphism of Lawvere theories. Then $G$ strictly respects the binary coproduct structures of Lemma \ref{2015.12.24.l3}. 
\end{lemma}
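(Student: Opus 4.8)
The plan is to unfold the explicit description of the standard binary coproducts structures supplied by Lemma \ref{2015.12.24.l3} on both $T$ and $T'$, and then to reduce each of the three required equalities to the single defining identity $L\circ G=L'$, applied separately on objects and on morphisms. Fix objects $X,Y$ of $T$ and write $m=L^{-1}(X)$, $n=L^{-1}(Y)$, so that $X=L(m)$ and $Y=L(n)$.

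The first, and essentially the only, point requiring attention is the behaviour of $G$ on objects. Since $L\circ G=L'$, evaluating the composite functor on an object $k\in\nat$ gives $G(L(k))=L'(k)$. Applied to $k=m$ and $k=n$ this yields $G(X)=L'(m)$ and $G(Y)=L'(n)$, and because $L'$ is a bijection on objects (condition 1 of Definition \ref{2015.11.24.def1}) this is equivalent to $(L')^{-1}(G(X))=m=L^{-1}(X)$ and likewise $(L')^{-1}(G(Y))=n=L^{-1}(Y)$. This bookkeeping identity, which matches $L^{-1}$ in $T$ with $(L')^{-1}\circ G$ in $T'$, is exactly what makes the two standard coproduct structures line up under $G$.

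Next I would verify the three conditions of Definition \ref{2016.01.01.def1}. For the coproduct object: by Lemma \ref{2015.12.24.l3} one has $X\amalg Y=L(m+n)$, hence $G(X\amalg Y)=G(L(m+n))=L'(m+n)$; on the other hand $G(X)\amalg' G(Y)=L'\bigl((L')^{-1}(G(X))+(L')^{-1}(G(Y))\bigr)=L'(m+n)$, so the two agree. For the injections the computation is identical for $ii_0$ and $ii_1$: using $ii_0^{X,Y}=L(ii_0^{m,n})$ and $L\circ G=L'$ on morphisms gives $G(ii_0^{X,Y})=G(L(ii_0^{m,n}))=L'(ii_0^{m,n})$, while the target injection is $(ii_0')^{G(X),G(Y)}=L'(ii_0^{(L')^{-1}(G(X)),(L')^{-1}(G(Y))})=L'(ii_0^{m,n})$ by the bookkeeping identity of the previous paragraph; the two sides coincide, and similarly for $ii_1$.

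I expect no genuine obstacle here: once the object-level identity $(L')^{-1}\circ G=L^{-1}$ is recorded, each of the three conditions is a one-line rewrite combining $L\circ G=L'$ with the formulas of Lemma \ref{2015.12.24.l3}. The only place where care is needed is to keep the two inverse functions $L^{-1}$ and $(L')^{-1}$ straight, since the coproduct structure on $T$ is indexed by $L^{-1}$ of its arguments while the one on $T'$ is indexed by $(L')^{-1}$ of theirs; conflating them is the sole way the argument could go wrong.
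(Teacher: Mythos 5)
Your proposal is correct and follows exactly the route the paper intends: its proof is the one-line remark that the lemma ``follows by unfolding definitions and rewriting the equalities $L(L^{-1}(X))=X$ and $L^{-1}(L(n))=n$,'' and your write-up is precisely that unfolding, with the key bookkeeping identity $(L')^{-1}\circ G = L^{-1}$ on objects made explicit. Nothing is missing.
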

\begin{proof}
It follows by unfolding definitions and rewriting the equalities $L(L^{-1}(X))=X$ and $L^{-1}(L(n))=n$.
\end{proof}
\begin{lemma}
\llabel{2016.01.01.l5}
Let $G:(T,L)\sr (T',L')$ be a morphism of Lawvere theories. Then $G$ strictly respects the standard ordered finite coproduct structures on $T$ and $T'$.
\end{lemma}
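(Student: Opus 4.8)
The plan is to reduce the statement to Lemma \ref{2016.01.01.l3}, the abstract criterion for a functor to strictly respect finite ordered coproducts structures built by Construction \ref{2015.12.24.constr1}. By Definition \ref{2016.01.03.def2}, the standard finite ordered coproducts structure on $T$ is by definition the one produced by Construction \ref{2015.12.24.constr1} from the initial object $L(0)$ and the standard binary coproducts structure of Lemma \ref{2015.12.24.l3}, and likewise the structure on $T'$ comes from $L'(0)$ and the standard binary coproducts structure on $T'$. This is exactly the setup of Lemma \ref{2016.01.01.l3}, so I would invoke it and conclude that it suffices to verify its two hypotheses: that $G$ sends the initial object of $T$ to the initial object of $T'$, and that $G$ strictly respects the binary coproducts structures.

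For the second hypothesis there is nothing to do: it is precisely the assertion of Lemma \ref{2015.01.01.l4}, which I would simply cite. For the first hypothesis I would argue that the initial object of $T$ is $L(0)$ and the initial object of $T'$ is $L'(0)$ (these are part of the data of the Lawvere theory structures, by Definition \ref{2015.11.24.def1}), and that the defining property $L\circ G=L'$ of a morphism of Lawvere theories gives $G(L(0))=(L\circ G)(0)=L'(0)$. Thus both hypotheses of Lemma \ref{2016.01.01.l3} hold, and the lemma yields the claim.

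The step I expect to require the most care is not a computation but the bookkeeping in the reduction itself, in light of Remark \ref{2016.01.05.rem1}: in general strictly respecting the initial object together with the binary coproducts structure is \emph{not} enough to strictly respect an arbitrary finite ordered coproducts structure. What rescues the present situation is that on both $T$ and $T'$ we are using precisely the \emph{standard} finite ordered coproducts structures, i.e.\ the ones manufactured by Construction \ref{2015.12.24.constr1} from the respective initial objects and binary coproducts structures. Hence the hypothesis of Lemma \ref{2016.01.01.l3} (that both structures arise from that construction) is genuinely met, and the lemma applies verbatim. Once that identification is made explicit, the argument is immediate and there is no real obstacle.
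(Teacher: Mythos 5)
Your proof is correct and follows exactly the paper's own argument: it reduces the claim to Lemma \ref{2016.01.01.l3}, verifies the binary coproducts hypothesis via Lemma \ref{2015.01.01.l4}, and checks the initial object hypothesis via $G(L(0))=(L\circ G)(0)=L'(0)$. The extra remark about why Remark \ref{2016.01.05.rem1} is not an obstacle is a sensible clarification but does not change the argument.
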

\begin{proof}
It follows directly from Lemmas \ref{2016.01.01.l3} and \ref{2015.01.01.l4} and the equality $G(L(0))=(L\circ G)(0)=L'(0)$.
\end{proof}

\subsection{Lawvere theories and $Jf$-relative monads}
\llabel{LRML}

Let us start by reminding that for any set $U$ there is a category $Sets(U)$ of the following form. The set of objects of $Sets(U)$ is $U$. The set of morphisms is
$$Mor(Sets(U))=\cup_{X,Y\in U}Fun(X,Y)$$
Since a function from $X$ to $Y$ is defined as a triple $(X,Y,G)$ where $G$ is the graph subset of this function the domain and codomain functions are well defined on $Mor(Sets(U))$ such that
$$Mor_{Sets(U)}(X,Y)=Fun(X,Y)$$
and a composition function can be defined that restricts to the composition of functions function on each $Mor_{Sets(U)}(X,Y)$. Finally the identity function $U\sr Mor(Sets(U))$ is obvious and the collection of data that one obtains satisfies the axioms of a category. This category is called the category of sets in $U$ and denoted $Sets(U)$. 

We will only consider the case when $U$ is a universe.

Following \cite{ACU} we let $Jf_U:F\sr Sets(U)$ denote the functor that takes $n$ to $stn(n)$ and that is the identity on morphisms between two objects (on the total sets of morphisms the morphism component of this functor is the inclusion of a subset). Recall that we use the expression ``a $J$-relative monad'' as a synonym for the expression ``a relative monad on $J$''. 

By simply unfolding definitions we get the following explicit form for the definition of a $Jf_U$-relative monad. 
\begin{lemma}
\llabel{2016.01.01.l1}
A $Jf_U$-relative monad is a collection of data of the form:
\begin{enumerate}
\item for each $n\in\nat$ a set $RR(n)$ in $U$,
\item for each $n\in\nat$ a function $stn(n)\sr RR(n)$,
\item for each $m,n\in \nat$ and $f:stn(m)\sr RR(n)$, a function $\mbind(f):RR(m)\sr RR(n)$,
\end{enumerate}
such that the following conditions hold:
\begin{enumerate}
\item for all $n\in\nat$, $\mbind(\eta(n))=Id_{RR(n)}$,
\item for all $f:stn(m)\sr RR(n)$, $\eta(m)\circ \mbind(f)=f$,
\item for all $f:stn(k)\sr RR(m)$, $g:stn(m)\sr RR(n)$, $\mbind(f)\circ \mbind(g)=\mbind(f\circ \mbind(g))$.
\end{enumerate}
\end{lemma}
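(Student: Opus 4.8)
The plan is to prove the lemma by specializing Definition \ref{2015.12.22.def1} to the case $C=F$, $D=Sets(U)$, $J=Jf_U$ and then rewriting each datum and each axiom using the explicit descriptions of these objects recorded above. First I would fix the three translation facts that drive every subsequent step: $Ob(F)=\nat$; the objects of $Sets(U)$ form the set $U$ and $Mor_{Sets(U)}(X,Y)=Fun(X,Y)$ by the construction of $Sets(U)$; and $Jf_U(n)=stn(n)$ on objects while $Jf_U$ is the identity (a subset inclusion on total morphism sets) on morphisms.

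With these in hand I would go through Definition \ref{2015.12.22.def1} clause by clause. For the data: item (1), a function $RR:Ob(C)\sr Ob(D)$, becomes a function $\nat\sr U$, i.e. a family of sets $RR(n)$ in $U$; item (2), a morphism $\eta(X):J(X)\sr RR(X)$ for each $X$, becomes a function $\eta(n)\in Fun(stn(n),RR(n))$ for each $n$; and item (3), a morphism $\mbind(f):RR(X)\sr RR(Y)$ for each $f:J(X)\sr RR(Y)$, becomes, after reading $X=m$, $Y=n$, a function $\mbind(f):RR(m)\sr RR(n)$ assigned to each $f\in Fun(stn(m),RR(n))$. For the axioms I would simply transcribe the three equalities, noting only that every composite appearing in them is a composite of morphisms of $Sets(U)$, which by the definition of $Sets(U)$ is ordinary composition of functions; the third axiom is obtained by renaming the variables $(X,Y,Z)$ of Definition \ref{2015.12.22.def1} to $(k,m,n)$.

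The one point that needs genuine care --- and the only thing I would regard as an obstacle, albeit a small one --- is that the lemma asserts an \emph{identity} of the two kinds of data, not merely a bijective correspondence between them. This is legitimate precisely because the relevant equalities hold on the nose: $Mor_{Sets(U)}(X,Y)=Fun(X,Y)$ literally, and $Jf_U$ is the identity on morphism sets, so the hom-set $Mor(J(X),RR(Y))$ over which $f$ ranges in item (3) is exactly $Fun(stn(m),RR(n))$, with no coercion hidden in the domain of $f$. Granting these set-level equalities, nothing further is to be verified: the remaining content is the purely notational matching of names, and since every step is an equality of sets rather than a construction up to isomorphism, no choices are made, which is what keeps the argument inside the intended choice-free, excluded-middle-free framework.
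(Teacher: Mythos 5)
Your proposal is correct and is exactly the paper's argument: the lemma is justified there by the single remark that it follows ``by simply unfolding definitions,'' and your clause-by-clause specialization of Definition \ref{2015.12.22.def1} to $C=F$, $D=Sets(U)$, $J=Jf_U$ is just that unfolding carried out explicitly. Your added observation that the identification holds on the nose because $Mor_{Sets(U)}(X,Y)=Fun(X,Y)$ literally and $Jf_U$ is a subset inclusion on morphisms is a useful point the paper leaves implicit, but it does not change the route.
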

The main goal of this section is to provide a construction for the following problem.
\begin{problem}\llabel{2016.01.05.prob1}
For a universe $U$ to construct an equivalence between the category $LW(U)$ of Lawvere theories in $U$ and the category $RMon(Jf_U)$ of $Jf_U$-relative monads.
\end{problem}
The construction will be given in Construction \ref{2016.01.05.constr1} below. 
\begin{lemma}\llabel{2015.12.22.l3}
Let ${\bf RR}$ be a relative monad on $Jf:F\sr Sets(U)$. Then $(K({\bf RR}),L_{{\bf RR}})$ is a Lawvere theory.
\end{lemma}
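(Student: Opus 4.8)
The plan is to verify that $L_{{\bf RR}}$ is a Lawvere theory structure on $K({\bf RR})$ using the equivalent characterization of Lemma \ref{2015.12.24.l3}, which replaces the push-out condition of Definition \ref{2015.11.24.def1} by the requirement that a specific family of cocones is a binary coproducts structure. Throughout I would use that $L_{{\bf RR}}$ is the identity on objects (Construction \ref{2015.12.22.constr4}), so that $Ob(K({\bf RR}))=\nat$, the inverse $L_{{\bf RR}}^{-1}$ is the identity, and a morphism $m\sr n$ in $K({\bf RR})$ is by definition a function $stn(m)\sr RR(n)$ in $Sets(U)$.

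First I would dispatch the two easy conditions. The functor $L_{{\bf RR}}$ is a bijection on objects because its object component is $Id_{\nat}$. Next, $L_{{\bf RR}}(0)=0$ is an initial object of $K({\bf RR})$: a morphism $0\sr n$ is an element of $Fun(stn(0),RR(n))=Fun(\emptyset,RR(n))$, and there is exactly one function out of the empty set.

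The main step is to show that the family
$$(m,n)\mapsto (m+n,\ L_{{\bf RR}}(ii_0^{m,n}),\ L_{{\bf RR}}(ii_1^{m,n}))$$
supplied by Lemma \ref{2015.12.24.l3} is a binary coproducts structure on $K({\bf RR})$. Fix an object $W$ and morphisms $f_0:m\sr W$, $f_1:n\sr W$ of $K({\bf RR})$, i.e. functions $f_0:stn(m)\sr RR(W)$ and $f_1:stn(n)\sr RR(W)$. A candidate mediating morphism $h:m+n\sr W$ is a function $h:stn(m+n)\sr RR(W)$, and by Lemma \ref{2016.01.03.l4} the defining equations
$$L_{{\bf RR}}(ii_0^{m,n})\circ_{K({\bf RR})}h=f_0,\qquad L_{{\bf RR}}(ii_1^{m,n})\circ_{K({\bf RR})}h=f_1$$
reduce to the equations $ii_0^{m,n}\circ h=f_0$ and $ii_1^{m,n}\circ h=f_1$ in $Sets(U)$. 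Thus the universal property I must verify is exactly the assertion that $stn(m+n)$, equipped with the initial- and concluding-segment injections $ii_0^{m,n},ii_1^{m,n}$, is the coproduct of $stn(m)$ and $stn(n)$ in $Sets(U)$, tested against the object $RR(W)$.

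This last assertion follows by direct set-theoretic reasoning: one sets $h(i)=f_0(i)$ for $i<m$ and $h(i)=f_1(i-m)$ for $m\le i<m+n$, the case split being legitimate because the comparison $i<m$ is decidable, while uniqueness holds because the two injections are jointly surjective onto $stn(m+n)$. This is the same computation that underlies Lemma \ref{2016.01.03.l2}(2), now carried out with the arbitrary target $RR(W)\in U$ in place of a target in $F$. I expect this constructive coproduct verification to be the only genuinely non-formal point; the reduction through Lemma \ref{2016.01.03.l4} and the first two conditions are bookkeeping. Once the binary coproducts structure is in hand, Lemma \ref{2015.12.24.l3} yields that $L_{{\bf RR}}$ is a Lawvere theory structure, so $(K({\bf RR}),L_{{\bf RR}})$ is a Lawvere theory.
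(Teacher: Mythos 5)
Your proof is correct and follows essentially the same route as the paper: both come down to the fact that $stn(m+n)$ with the two segment injections is a coproduct of $stn(m)$ and $stn(n)$ when tested against an arbitrary set $RR(k)$, which the paper phrases as the corresponding square of $Fun$-sets being a pull-back while you phrase it via the binary-coproducts reformulation of Lemma \ref{2015.12.24.l3}. One small slip: the reduction of Kleisli composition to composition in $Sets(U)$ that you invoke is Lemma \ref{2016.01.03.l4b}, not Lemma \ref{2016.01.03.l4}.
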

\begin{proof}
We need to prove that the pair $(K({\bf RR}),L_{{\bf RR}})$ satisfies conditions of Definition \ref{2015.11.24.def1}. The first condition is obvious. The second condition is also obvious since $Fun(stn(0),RR(n))$ is a one point set for any set $RR(n)$. The third condition is straightforward to prove as well since the square 
$$
\begin{CD}
Fun(stn(m+n),RR(k)) @>ii_1^{m,n}\circ\_>> Fun(stn(n),RR(k))\\
@Vii_0^{m,n}\circ\_VV @VVV\\
Fun(stn(m),RR(k)) @>>> Fun(stn(0),RR(k))
\end{CD}
$$
is a pull-back square for any set $RR(k)$.
\end{proof}
\begin{problem}\llabel{2016.01.01.prob1}
To construct a functor $RML_U:RMon(Jf_U)(U)\sr LW(U)$.
\end{problem}
\begin{construction}\rm\llabel{2015.12.22.def5}
We define the object component of $RML$ setting
$$RML_{Ob}({\bf RR})=(K({\bf RR}),L_{{\bf RR}})$$
It is well defined by Lemma \ref{2015.12.22.l3}. 

We define the morphism component of $RLM$ setting
$RML_{Mor}(\phi)=K(\phi)$.
It is well defined by the condition of Problem \ref{2015.12.22.prob4}. 

The identity and composition axioms of a functor follow from Lemma \ref{2016.01.01.l2}. 
\end{construction}
Below we consider, for a Lawvere theory $(T,L)$, the category $T$ with the finite ordered coproducts structure obtained by applying Lemma \ref{2015.12.24.l3} and Construction \ref{2015.12.24.constr1}. 
\begin{problem}\llabel{2015.12.22.prob5}
Let $U$ be a universe and $(T,L)$ a Lawvere theory in $U$. To construct a $Jf_U$-relative monad ${\bf RR}=(RR,\eta,\mbind)$.
\end{problem}
\begin{construction}\rm\llabel{2015.12.22.constr6}
We set:
\begin{enumerate}
\item $RR(n)=Mor_T(L(1),L(n))$,
\item $\eta(n)$ is the function $stn(n)\sr Mor_T(L(1),L(n))$ given by
$$\eta(n)(i) = ii_i^{(L(1),\dots,L(1))}.$$ 
This function is well defined because 
$$\amalg_{i=0}^{n-1}L(1)=L(n)$$
by Lemma \ref{2015.12.24.l5b},
\item for $f\in Fun(stn(m),Mor_T(L(1),L(n)))$ we define 
$$\mbind(f)\in Fun(Mor_T(L(1),L(m)), Mor_T(L(1),L(n)))$$
as $g\mapsto g\circ \Sigma_{i=0}^{m-1} f(i)$. This formula is again well-defined in view of Lemma \ref{2015.12.24.l5b}.
\end{enumerate}
Let us verify the conditions of Lemma \ref{2016.01.01.l1}. 

For the first condition we have 
$$\mbind(\eta(n))(g)=g\circ \Sigma_{i=0}^{n-1}\eta(n)(i)=g\circ \Sigma_{i=0}^{n-1}ii_i^{(L(1),\dots,L(1))}=g\circ Id_{L(n)}=g$$
where the third equality is by Lemma \ref{2015.12.24.l4}.

For the second condition let $f\in Fun(stn(m),Mor_T(L(1),L(n)))$. To verify that $\eta(m)\circ \mbind(f)=f$ we need to verify that these two functions from $stn(m)$ are equal, i.e., that for each $i=0,\dots,m-1$ we have
$$(\eta(m)\circ \mbind(f))(i)=f(i)$$
We have
$$(\eta(m)\circ \mbind(f))(i)=\mbind(f)(\eta(m)(i))=\mbind(f)(ii_i^{(L(1),\dots,L(1))})=ii_i^{(L(1),\dots,L(1))}\circ \Sigma_{j=0}^{m-1}f(j)=f(i)$$

To prove the third condition we need to show that
$$\mbind(f)\circ \mbind(g)=\mbind(f\circ \mbind(g))$$
for all  $f\in Fun(stn(k),Mor_T(L(1),L(m)))$ and $g\in Fun(stn(m),Mor_T(L(1),L(n)))$. 

Both sides are functions from $Mor_T(L(1),L(k))$. To verify that they are equal we need to show that for any $h\in Mor_T(L(1),L(k))$ we have
$$(\mbind(f)\circ \mbind(g))(h)=\mbind(f\circ \mbind(g))(h)$$
We have
$$(\mbind(f)\circ \mbind(g))(h)=\mbind(g)(\mbind(f)(h))=\mbind(g)(h\circ \Sigma_{i=0}^{k-1}f(i))=h\circ (\Sigma_{i=0}^{k-1} f(i))\circ (\Sigma_{j=0}^{m-1} g(j))$$
and
$$\mbind(f\circ \mbind(g))(h)=h\circ (\Sigma_{i=0}^{k-1} (f\circ \mbind(g))(i))=h\circ (\Sigma_{i=0}^{k-1} (\mbind(g)(f(i))))=h\circ (\Sigma_{i=0}^{k-1} (f(i)\circ \Sigma_{j=0}^{m-1}g(j)))$$
The right hand sides of these two expressions are equal by Lemma \ref{2015.12.24.l5}. This completes the construction.
\end{construction}
We let $LRM(T,L)$ denote the $Jf_U$-relative monad defined in Construction \ref{2015.12.22.constr6}.
\begin{problem}\llabel{2016.01.01.prob2}
Let $G:(T,L)\sr (T',L')$ be a morphism of Lawvere theories. To construct a morphism of relative monads $LRM(T,L)\sr LRM(T',L')$.
\end{problem}
\begin{construction}\rm\llabel{2016.01.01.constr2}
We need to construct a family of functions 
$$\phi(n):Mor_{T}(L(1),L(n))\sr Mor_{T'}(L'(1),L'(n))$$
that satisfies the conditions of Definition \ref{2015.12.22.def2} for $J=Jf$ and relative monads $LRM(T,L)=(RR,\eta,\mbind)$ and $LRM(T',L')=(RR',\eta',\mbind')$. Set
$$\phi(n)=G_{L(1), L(n)}$$
since $L'=L\circ G$ these functions have the correct domain and codomain. 

For the first condition of Definition \ref{2015.12.22.def2} we need to show that for any $n\in\nat$ one has
$$\eta'(n)=\eta(n)\circ G_{L(1), L(n)}$$
Since both sides are functions from $stn(n)$ it is sufficient to show that for all $i=0,\dots,n-1$ one has $\eta'(n)(i)=(\eta(n)\circ G_{L(1), L(n)})(i)$. By construction
$$(\eta(n)\circ G_{L(1),L(n)})(i)=G(\eta(n)(I))=G(ii^X_i)$$
and
$$\eta'(n)(i)=ii^{X'}_i$$
where $X=(L(1),\dots,L(1))$ and $X'=(L'(1),\dots,L'(1))$. Therefore we need to show that $G(ii^X_i)=ii^{X'}_i$. This follows from Lemma \ref{2016.01.01.l5}.

For the second condition of Definition \ref{2015.12.22.def2} let $f:stn(m)\sr Mor_T(L(1),L(n))$. We need to show that
$$\mbind(f)\circ \phi(n)=\phi(m)\circ \mbind(f\circ \phi(n))$$
Both sides are functions from $Mor_T(L(1),L(m))$ to $Mor_{T'}(L'(1),L'(n))$. To show that they are equal we have to show that for each $g\in Mor_T(L(1),L(m))$ one has
$$(\mbind(f)\circ \phi(n))(g)=(\phi(m)\circ \mbind'(f\circ \phi(n)))(g)$$
For the left hand side of this equality we have:
$$(\mbind(f)\circ \phi(n))(g)=\phi(n)(\mbind(f)(g))=\phi(n)(g\circ \Sigma_{i=0}^{m-1}f(i))=G(g\circ \Sigma_{i=0}^{m-1}f(i))=G(g)\circ G(\Sigma_{i=0}^{m-1}f(i))=$$$$G(g)\circ \Sigma_{i=0}^{m-1}G(f(i))$$
where the last equality follows from Lemma \ref{2016.01.01.l6}.

For the right hand side we have:
$$(\phi(m)\circ \mbind'(f\circ \phi(n)))(g)=\mbind'(f\circ \phi(n))(\phi(m)(g))=\mbind'(f\circ \phi(n))(G(g))=G(g)\circ \Sigma_{i=0}^{m-1}(f\circ \phi(n))(i)=$$$$G(g)\circ \Sigma_{i=0}^{m-1}(\phi(n)(f(i)))=G(g)\circ \Sigma_{i=0}^{m-1}G(f(i))$$
This completes the proof of the second condition of Definition \ref{2015.12.22.def2} and the construction.
\end{construction}
We let $LRM(\phi)$ or $LRM_{Mor}(\phi)$ denote the morphism of relative monads defined by Construction \ref{2016.01.01.constr2}
\begin{problem}
\llabel{2016.01.01.prob3}
For a universe $U$, to construct a functor
$$LRM_U:LW(U)\sr RMon(Jf_U)$$
\end{problem}
\begin{construction}\rm\llabel{2016.01.01.constr5}
We define the object component of $LRM$ as the function defined by Construction \ref{2015.12.22.constr6} and the morphism component as the function defined by Construction \ref{2016.01.01.constr2}.

We need to verify that these two functions satisfy the identity and composition axioms of a functor.

Both follow immediately from the definitions of the identity functor and composition of functors. 
\end{construction}
\begin{problem}
\llabel{2016.01.01.prob4}
For any universe $U$ to construct an  isomorphism of functors
$$RML_U\circ LRM_U\sr Id_{RMon(Jf_U)}.$$
\end{problem}
\begin{construction}\rm\llabel{2016.01.01.constr6}
Let ${\bf RR}=(RR,\eta,\mbind)$ be a $Jf_U$-relative monad. Let 
$$(T,L)=RML_U(RR,\eta,\mbind)$$
and
$$(RR',\eta',\mbind')=LRM_U(T,L).$$
We need to construct an  isomorphism of relative monads
$$\phi_{{\bf RR}}:(RR',\eta',\mbind')\sr (RR,\eta,\mbind)$$
and show that the family $\phi_{{\bf RR}}$ satisfies the naturality axiom of the definition of functor morphism. 

We have
$$RR'(n)=Mor_T(L(1),L(n))=Mor_{K({\bf RR})}(L_{{\bf RR}}(1),L_{{\bf RR}}(n))=Mor_{K({\bf RR})}(1,n)=$$$$Fun(stn(1),RR(n))$$
and we define $\phi_{{\bf RR}}(n):RR'(n)\sr RR(n)$ as the obvious bijection given by setting 
$$\phi_{{\bf RR}}(n)(f)=f(0)$$
Let us show that these functions form a morphism of relative monads, i.e., that they satisfy two conditions of Definition \ref{2015.12.22.def2}. We should exchange places between the $\eta$ and $\eta'$ since we consider a morphism ${\bf RR}'\sr {\bf RR}$. The first condition becomes
$$\eta(n)(i)=(\eta'(n)\circ\phi_{{\bf RR}}(n))(i)$$
for any $n\in \nat$ and $i=0,\dots,n-1$ and the second
$$(\mbind'(f)\circ \phi_{{\bf RR}}(n))(g)=(\phi_{{\bf RR}}(m)\circ \mbind(f\circ \phi_{{\bf RR}}(n)))(g)$$
for any $f\in Fun(stn(m), RR'(n))$ and $g\in RR'(m)$.

For $n\in\nat$ and $i=0,\dots,n-1$ we have
$$(\eta'(n)\circ \phi_{{\bf RR}}(n))(i)=\phi_{\bf RR}(n)(\eta'(n)(i))=\phi_{{\bf RR}}(ii_i^{(L(1),\dots,L(1))})=ii_i^{(L(1),\dots,L(1))}(0)=$$$$L(ii_i^{(1,\dots,1)})(0)=L_{{\bf RR}}(ii_i^{(1,\dots,1)})(0)=(ii_i^{(1,\dots,1)}\circ \eta(n))(0)=\eta(n)(ii_i^{(1,\dots,1)}(0))=\eta(n)(i)$$
where the fourth equality is by Lemma \ref{2015.12.24.l5b} and the eighth equality is by Lemma \ref{2016.01.03.l3}. 

For the second condition, $f\in Fun(stn(m), RR'(n))$ and $g\in RR'(m)$ we have
$$(\mbind'(f)\circ \phi_{{\bf RR}}(n))(g)=\phi_{{\bf RR}}(n)(\mbind'(f)(g))=\phi_{{\bf RR}}(n)(g\circ_T \Sigma_{i=0}^{m-1}f(i))=(g\circ_T \Sigma_{i=0}^{m-1}f(i))(0)$$
where $f$ is considered as an element of $Fun(stn(m),Mor_T(L(1),L(n)))$ and $g$ as an element of $Mor_T(L(1),L(m))$. Next we have:
$$(g\circ_T \Sigma_{T,i=0}^{m-1}f(i))(0)=(g\circ_{K({\bf RR})} \Sigma_{T,i=0}^{m-1}f(i))(0)=(g\circ \mbind(\Sigma_{T,i=0}^{m-1}f(i)))(0)=\mbind(\Sigma_{T,i=0}^{m-1}f(i))(g(0))$$
where on the right $g$ is considered as an element of $Fun(stn(1),RR(m))$. 

On the other hand we have:
$$(\phi_{{\bf RR}}(m)\circ \mbind(f\circ \phi_{{\bf RR}}(n)))(g)=\mbind(f\circ \phi_{{\bf RR}}(n))(\phi_{{\bf RR}}(m)(g))=\mbind(f\circ \phi_{{\bf RR}}(n))(g(0))$$
where on the right $g$ is considered as an element of $Fun(stn(1),RR(m))$. 

Let us show that 
$$\Sigma_{T,i=0}^{m-1}f(i)=f\circ \phi_{{\bf RR}}(n),$$
Since both sides are morphisms in $T$ from $L(m)$ to $L(n)$ and it is sufficient to show that for any $j=0,\dots,m$ one has
$$ii_j^{(L(1),\dots,L(1))}\circ_T (\Sigma_{T,i=0}^{m-1}f(i))=ii_j^{(L(1),\dots,L(1))}\circ_T (f\circ \phi_{{\bf RR}}(n))$$
The left hand side equals $f(j)$. For the right hand side we have
$$ii_j^{(L(1),\dots,L(1))}\circ_T (f\circ \phi_{{\bf RR}}(n))=L(ii_j^{(1,\dots,1)})\circ_T (f\circ \phi_{{\bf RR}}(n))=L(ii_j^{(1,\dots,1)})\circ_{K({\bf RR})} (f\circ \phi_{{\bf RR}}(n))=$$$$
ii_j^{(1,\dots,1)}\circ f\circ \phi_{{\bf RR}}(n)$$
where the first equality is by Lemma \ref{2015.12.24.l5b} and the third equality is by Lemma \ref{2016.01.03.l4b}. Both $f(j)$ and $ii_j^{(1,\dots,1)}\circ f\circ \phi_{{\bf RR}}(n)$ are elements of $Fun(stn(1),RR(n))$. To prove that they are equal it is sufficient to prove that they coincide on $0$. We have:
$$(ii_j^{(1,\dots,1)}\circ f\circ \phi_{{\bf RR}}(n))(0)=(f\circ \phi_{{\bf RR}}(n))(i)=\phi_{{\bf RR}}(n)(f(i))=f(i)(0)$$
where the first equality is by Lemma \ref{2016.01.03.l3}(2). 

This completes the proof of the fact that the family of functions $\phi_{{\bf RR}}$ is a morphism of relative monads. 

Let us show that the family $\phi_{{\bf RR}}$ satisfies the naturality axiom of the definition of functor morphism. Let $u:{\bf RR}_1\sr {\bf RR}_2$ be a morphism of relative monads. Let $(T_i,L_i)=RML({\bf RR}_i)$ and ${\bf RR}'_i=LRM(T_i,L_i)$, $i=1,2$. Let $G=RML(u)$ and $u'=LRM(G)$. We need to show that the square
$$
\begin{CD}
{\bf RR}'_1 @>u'>> {\bf RR}'_2\\
@V\phi_{{\bf RR}_1} VV @VV\phi_{{\bf RR}_2} V\\
{\bf RR}_1 @>u>> {\bf RR}_2
\end{CD}
$$
commutes, i.e., that for any $n\in\nat$ one has
\begin{eq}\llabel{2016.01.03.eq1}
u'(n)\circ \phi_{{\bf RR}_2}(n)=\phi_{{\bf RR}_1}(n)\circ u(n)
\end{eq}
We have that 
$$u'(n)\in Fun(RR'_1(n),RR'_2(n))=Fun(Fun(stn(1),RR_1(n)),Fun(stn(1),RR_2(n)))$$
and
$$u'(n)(f)=(LRM(G)(n))(f)=G_{L_1(1),L_1(n)}(f)=G_{1,n}(f)=f\circ u(n)$$
Both sides of (\ref{2016.01.03.eq1}) are functions from $Fun(stn(1),RR_1(n))$. Therefore to prove that they are equal we need to prove that their values on any $f\in Fun(stn(1),RR_1(n))$ are equal. We have:
$$(u'(n)\circ \phi_{{\bf RR}_2}(n))(f)=\phi_{{\bf RR}_2}(n)(u'(n)(f))=(u'(n)(f))(0)=(f\circ u(n))(0)=u(n)(f(0))$$
and
$$(\phi_{{\bf RR}_1}(n)\circ u(n))(f)=u(n)(\phi_{{\bf RR}_1}(n)(f))=u(n)(f(0)).$$

This completes the proof of the fact that the family $\phi_{{\bf RR}}$ is a morphism of functors $RML_U\circ LRM_U\sr Id_{RMon(Jf_U)}$. That it is an isomorphism follows from the general properties of functor morphisms and Lemma \ref{2016.01.03.l5}. This completes Construction \ref{2016.01.01.prob4}.
\end{construction}
\begin{problem}
\llabel{2016.01.03.prob1}
For a universe $U$ to construct a functor isomorphism
$$LRM_U\circ RML_U\sr Id_{LW(U)}$$
\end{problem}
\begin{construction}\llabel{2016.01.03.constr1}\rm
Let $(T,L)$ be a Lawvere theory in $U$. Let
$$(RR,\eta,\mbind)=LRM(T,L)$$
and
$$(T',L')=RML(RR,\eta,\mbind)$$
We need to construct an isomorphism of Lawvere theories 
$$G^{(T,L)}:(T',L')\sr (T,L)$$
and show that the family $G^{(T,L)}$ is natural with respect to the morphisms of Lawvere theories $(T_1,L_1)\sr (T_2,L_2)$. While constructing $G^{(T,L)}$ we will abbreviate its notation to $G$.

We have:
$$Ob(T')=Ob(K({\bf RR}))=Ob(F)=\nat$$
$$Mor_{T'}(m,n)=Mor_{K({\bf RR})}(m,n)=Fun(stn(m),RR(n))=Fun(stn(m),Mor_T(L(1),L(n)))$$
We set the object component of $G$ to be the object component of $L$.  

We set the morphism component
$$G_{m,n}:Mor_{T'}(m,n)=Fun(stn(m),Mor_T(L(1),L(n)))\sr Mor_T(L(m),L(n))=Mor_{T}(m,n)$$
to be of the form:
$$G_{m,n}(f)=\Sigma_{T,i=0}^{m-1}f(i)$$
To show that $G_{m,n}$ is a bijection consider the function in the opposite direction given by, for $u\in Mor_T(m,n)$ and $i=0,\dots,m-1$
$$G^*_{m,n}(u)(i)=ii_i^{(L(1),\dots,L(1))}\circ u$$
The fact that $G$ and $G^*$ are mutually inverse follows easily from the definition of finite ordered coproducts. 

Let us show that $G$ is a functor. For the composition axiom, let $f\in Mor_{T'}(k,m)$, $g\in Mor_{T'}(m,n)$, then
$$G_{k,m}(f)\circ_T G_{m,n}(g)=(\Sigma_{T,i=0}^{k-1}f(i))\circ_T (\Sigma_{T,j=0}^{m-1}g(j))=\Sigma_{T,i=0}^{k-1}(f(i)\circ_T  (\Sigma_{T,j=0}^{m-1}g(j)))$$
and
$$G_{k,n}(f\circ_{T'} g)=\Sigma_{T,i=0}^{k-1}((f\circ\mbind(g))(i))=\Sigma_{T,i=0}^{k-1}(\mbind(g)(f(i)))=\Sigma_{T,i=0}(f(i)\circ_T(\Sigma_{j=0}^{m-1}g(j)))$$
where the last equality is by Construction \ref{2015.12.22.constr6}(3). 

For the identity axiom, let $n\in \nat$ then
$$G_{n,n}(Id_{T',m})=G_{n,n}(\eta(m))=\Sigma_{T,i=0}^{m-1}(\eta(m)(i))=\Sigma_{T,i=0}^{m-1}(ii_i^{(L(1),\dots,L(1))})=Id_{T,L(m)}$$
where the first equality is by Construction \ref{2015.12.22.constr3}, the third one is by Construction \ref{2015.12.22.constr6}(2) and the third one is by Lemma \ref{2015.12.24.l4}.

To prove that $G$ is a morphism of Lawvere theories we have to show that $L'\circ G=L$. On objects the equality is obvious. To show that it holds on morphisms let $u\in Fun(stn(m),stn(n))$. Then
$$(L'\circ G)(u)=G(L'(u))=\Sigma_{T,i=0}^m L'(u)(i)=\Sigma_{T,i=0}^m L_{{\bf RR}}(u)(i)=\Sigma_{T,i=0}^m (u\circ \eta(n))(i)=$$$$\Sigma_{T,i=0}^m \eta(n)(u(i))=\Sigma_{T,i=0}^m ii^{(L(1),\dots,L(1))}_{u(i)}=L(u)$$
where the fourth equality is by Construction \ref{2015.12.22.constr4} and the sixth one is by Construction \ref{2015.12.22.constr6}(2) and the seventh one is by Lemma \ref{2016.01.05.l1}.

This completes the construction of the Lawvere theory morphisms $G^{(T,L)}$. 

It remains to show that they are natural with respect to morphisms of Lawvere theories. Let $H:T_1\sr T_2$ be such a morphism. Let $(RR_i,\eta_i,\mbind_i)=LRM(T_i,L_i)$ for $i=1,2$, $(T_i',L_i')=RML(RR_i,\eta_i,\mbind_i)$, $\phi=LRM(H)$ and $H'=RML(\phi)$. 

Since $(L_i')_{Ob}=Id_{\nat}$ and $L_1'\circ H'=L_2'$ we have  that $(H')_{Ob}=Id_{\nat}$. 

For $m,n\in\nat$ and 
$$f\in Mor_{T'_1}(m,n)=Fun(stn(m),Mor_{T_1}(L_1(1),L_1(n)))$$
we have  
$$H'(f)=RML(\phi)(f)=K(\phi)(f)=f\circ \phi(n)=f\circ LRM(H)(n)=f\circ H_{L_1(1),L_1(n)}$$
where the third equality is by Construction \ref{2015.12.22.constr5} and the fifth equality is by Construction \ref{2016.01.01.constr2}.

We need to show that the square
$$
\begin{CD}
T'_1 @>H'>> T'_2\\
@VG^{(T_1,L_1)}VV @VVG^{(T_2,L_2)}V\\
T_1 @>H>> T_2
\end{CD}
$$
commutes. 

For the object components, since $(G^{(T_i,L_i)})_{Ob}=(L_i)_{Ob}$ it means that for all $n\in \nat$ one has
$$L_2(H'(n))=H(L_1(n)),$$
i.e., that $L_2(n)=H(L_1(n))$ which follows from the fact that $H$ is a morphism of Lawvere theories. 

For the morphism component it means that for all $f\in Fun(stn(m),Mor_{T_1}(L_1(1),L_1(n)))$ one has
$$G^{(T_2,L_2)}(H'(f))=H(G^{(T_1,L_1)}(f)),$$
For the left hand side we have:
$$G^{(T_2,L_2)}(H'(f))=G^{(T_2,L_2)}(f\circ H_{L_1(1),L_1(n)})=\Sigma_{T_2,i=0}^{m-1}(f\circ H_{L_1(1),L_1(n)})(i)=\Sigma_{T_2,i=0}^{m-1}(H(f(i)))$$
For the right hand side we have:
$$H(G^{(T_1,L_1)}(f))=H(\Sigma_{T_1,i=0}^{m-1}f(i))=\Sigma_{T_2,i=0}^{m-1}(H(f(i)))$$
where the second equality is by Lemmas \ref{2016.01.01.l5} and \ref{2016.01.01.l6}. 

This completes the proof that the constructed family of Lawvere theories morphisms $G^{(T,L)}$ is a morphism of functors and with it completes Construction \ref{2016.01.03.constr1}.
\end{construction}
We can now provide a construction for Problem \ref{2016.01.05.prob1}.
\begin{construction}\rm\llabel{2016.01.05.constr1}
A functor $RML_U$ from $RMon(Jf_U)$ to $LW(U)$ is provided by Construction \ref{2015.12.22.def5}. A functor $LMR_U$ from $LW(U)$ to $RMon(Jf_U)$ is provided by Construction \ref{2016.01.01.constr5}. A functor isomorphism $RML_U\circ LRM_U\sr Id_{RMon(Jf_U)}$ is provided by Construction \ref{2016.01.01.constr6}. A functor isomorphism $LRM_U\circ RML_U\sr Id_{LW(U)}$ is provided by Construction \ref{2016.01.03.constr1}.
\end{construction}
\begin{remark}\rm\llabel{2016.01.05.rem2}
The composition $RML_U\circ LRM_U$ is just slightly off from being {\em equal} to the identity functor on $RMon(Jf_U)$. It might appear that one can achieve the equality by considering a modified version $LRM'$ of the functor $LRM$ that sends $(T,L)$ to the relative monad based on the family of sets $Mor_T(L(1),L(n))^m$ where for a set $X$ and $m\in \nat$ one defines $X^m$ inductively as $X^0=stn(1)$, $X^1=X$ and $X^{n+1}=X^n\times X$. However, even this modified version of $LRM$ fails to achieve the equality due to the coercions that we need to insert to make our expression completely transparent. Indeed, the set of morphisms of the category $T$ in $(T,L)=LRM'({\bf RR})$ is $\amalg_{m,n\in\nat}RR(n)^m$ and the set $RR'(n)$ in ${\bf RR}'=RML(T,L)$ is $Mor_T(1,n)$, i.e., the set of iterated pairs of the form $((1,n),x)$ where $x\in RR(n)$. 
\end{remark}

{\em Acknowledgements:} This material is based on research sponsored by The United States Air Force Research Laboratory under agreement number FA9550-15-1-0053. The US Government is authorized to reproduce and distribute reprints for Governmental purposes notwithstanding  any copyright notation thereon.

The views and conclusions contained herein are those of the author and should not be interpreted as necessarily representing the official policies or endorsements, either expressed or implied, of the United States Air Force Research Laboratory, the U.S. Government or Carnegie Melon University.

\def\cprime{$'$}


\end{document}